\newtheorem{rem}{Remark}
\newtheorem{thm}{Theorem}
\newtheorem*{thm*}{Theorem}
\newtheorem*{prop*}{Proposition}
\newtheorem{lem}{Lemma}
\newtheorem{cor}{Corollary}
\newtheorem{hypo}{Assumption}
\newcommand{\tn}{\mathrm}
\newcommand{\dis}{\displaystyle}
\newcommand{\R}{\mathbb{R}}
\newcommand{\N}{\mathbb{N}}
\renewcommand{\S}{\mathbb{S}}
\renewcommand{\L}{\mathbb{L}}
\newcommand*{\medcap}{\mathbin{\scalebox{0.75}{\ensuremath{\bigcap}}}}
\newcommand*{\esp}{\vspace{0.5cm}}
\definecolor{xdxdff}{rgb}{0.490196078431,0.490196078431,1.}
\definecolor{wwccff}{rgb}{0.4,0.8,1.}
\definecolor{qqqqff}{rgb}{0.,0.,1.}
\definecolor{cqcqcq}{rgb}{0.752941176471,0.752941176471,0.752941176471}
\definecolor{uuuuuu}{rgb}{0.266666666667,0.266666666667,0.266666666667}
\begin{document}

\title{\bf Local Error Estimates of the Finite Element Method for an Elliptic Problem with a Dirac Source Term}
\author{Silvia BERTOLUZZA, Astrid DECOENE, Lo\"ic LACOUTURE and S\'ebastien MARTIN}
\maketitle

\esp

{\bf Abstract:} The solutions of elliptic problems with a Dirac measure in right-hand side are not $H^1$ and therefore the convergence of the finite element solutions is suboptimal. Graded meshes are standard remedy to recover quasi-optimality, namely optimality up to a log-factor, for low order finite elements in $\L^2$-norm. Optimal (or quasi-optimal for the lowest order case) convergence has been shown in $\L^2$-norm on a subdomain which excludes the singularity. Here, on such subdomains, we show a quasi-optimal convergence for the $H^s$-norm, $s \geqslant 1$, and an optimal convergence in $H^1$-norm for the lowest order case, on a family of quasi-uniform meshes in dimension 2. The study of this problem is motivated by the use of the Dirac measure as a reduced model in physical problems, for which high accuracy of the finite element method at the singularity is not required. Our results are obtained using local Nitsche and Schatz-type error estimates, a weak version of Aubin-Nitsche duality lemma and a discrete inf-sup condition. These theoretical results are confirmed by numerical illustrations.

\esp

{\bf Key  words:} Dirichlet problem, Dirac measure, Green function, finite element method, local error estimates.

\esp

\section{Introduction.}
This paper deals with the accuracy of the finite element method on elliptic problems with a singular right-hand side. More precisely, let us consider the Dirichlet problem
\begin{equation*}
(P_{\delta}) \left\{ \begin{array}{rccl}
-\triangle u_{\delta} & = & \delta_{x_{0}} & \text{in } \Omega,\\
u_{\delta} & =  & 0 & \text{on } \partial \Omega,
\end{array} \right.
\end{equation*}
where $\Omega \subset \R^{2}$ is a bounded open $\mathcal{C}^{\infty}$ domain or a square, and $\delta_{x_{0}}$ denotes the Dirac measure concentrated at a point $x_{0} \in \Omega$ such that $\tn{dist}(x_{0},\partial \Omega) > 0$.

Problems of this type occur in many applications from different areas, like in the mathematical modeling of electromagnetic fields \cite{JJa}. Dirac measures can also be found on the right-hand side of adjoint equations in optimal control of elliptic problems with state constraints \cite{ECa2}. As further examples where such measures play an important role, we mention controllability for elliptic and parabolic equations \cite{CaCl, CaZu, LeMe} and parameter identification problems with pointwise measurements \cite{RaVe}.

Our interest in $(P_{\delta})$ is motivated by the modeling of the movement of a thin structure in a viscous fluid, such as cilia involved in the muco-ciliary transport in the lung \cite{FuBl}. In the asymptotic of a zero diameter cilium with an infinite velocity, the cilium is modelled by a lineic Dirac of force in the source term. In order to make the computations easier, the lineic Dirac of force can be approximated by a sum of punctual Dirac forces distributed along the cilium \cite{LLa}. In this paper, we address a scalar version of this problem: problem $(P_{\delta})$.

In the regular case, namely the Laplace problem with a regular right-hand side, the finite element solution $u^h$ is well-defined and for $u \in H^{k+1}(\Omega)$, we have, for all $0 \leqslant s \leqslant 1$,
\begin{equation}\label{ineclass}
\| u - u^h \|_{s} \leqslant C h^{k+1-s} \| u \|_{k+1},
\end{equation}
where $k$ is the degree of the method \cite{PCi} and $h$ the mesh size. In dimension 1, the solution $u_{\delta}$ of Problem $(P_{\delta})$ belongs to $H^1(\Omega)$, but it is not $H^2(\Omega)$. In this case, the numerical solution $u_{\delta}^h$ and the exact solution $u_{\delta}$ can be computed explicitly. If $x_{0}$ matches with a node of the discretization, $u_{\delta}^h = u_{\delta}$. Otherwise, this equality is true only on the complementary of the element which contains $x_{0}$, and the convergence orders are 1/2 and 3/2 respectively in $H^1$-norm and $\L^2$-norm. In dimension 2, Problem~$(P_{\delta})$ has no $H^1(\Omega)$-solution, and so, although the finite element solution can be defined, the $H^1(\Omega)$-error has no sense and the $\L^2(\Omega)$-error estimates cannot be obtained by the Aubin-Nitsche method without modification.

Let us review the literature about error estimates for problem $(P_{\delta})$, starting with discretizations on quasi-uniform meshes. Babu\~ska \cite{IBa} showed a $\L^2(\Omega)$-convergence of order $h^{1-\varepsilon}$, $\varepsilon > 0$, for a two-dimensional smooth domain. Scott proved in \cite{RSc} an a priori error estimates of order $2-\frac{d}{2}$, where the dimension $d$ is 2 or 3. The same result has been proved by Casas \cite{ECa1} for general Borel measures on the right-hand side.

To the best of our knowledge, in order to improve the convergence order, Eriksson \cite{KEr} was the first who studied the influence of locally refined meshes near $x_{0}$. Using results from \cite{ScWa}, he proved a convergence of order $k$ and $k+1$ in the $W^{1,1}(\Omega)$-norm and the $\L^1(\Omega)$-norm respectively, for approximations with a $P_{k}$-finite element method. Recently, by Apel and co-authors \cite{ApBe}, a $\L^2(\Omega)$-error estimate of order $h^2 | \ln h |^{3/2}$ has been proved in dimension 2, using graded meshes. Optimal convergence rates with graded meshes were also recovered by D'Angelo \cite{CDa} using weighted Sobolev spaces. A posteriori error estimates in weighted spaces have been established by Agnelli and co-authors \cite{AgGa}.

These theoretical a priori results for finite elements using graded meshes increase the complexity of the meshing and the computational cost, even if the mesh is refined only locally, especially if there are several Dirac measures in the right-hand side. Eriksson \cite{KEr2} developed a numerical method to solve the problem and recover the optimal convergence rate: the numerical solution is searched in the form $u_{0} + v_{h}$ where $u_{0}$ contains the singularity of the solution and $v_{h}$ is the numerical solution of a smooth problem. This method has been developped in the case of the Stokes problem in \cite{LLa}.

However, in applications, the Dirac measure at $x_{0}$ is often a model reduction approach, and a high accuracy at $x_{0}$ of the finite element method is not necessary. Thus, it is interesting to study the error on a fixed subdomain which excludes the singularity. Recently, K\"oppl and Wohlmuth have shown in \cite{KoWo} quasi-optimal convergence for low order in $\L^2$-norm for Lagrange finite elements and optimal convergence for higher order. In this paper, we show in dimension 2 a quasi-optimal convergence in $H^s$-norm, $s \geqslant 1$, and an optimal convergence in the case of low order. The $\L^2$-error estimates established in \cite{KoWo} are not used and the proof is based on different arguments. These results imply that graded meshes are not required to recover optimality far from the singularity and that there are no pollution effects.

The paper is organized as follows. Our main results are formulated in section \ref{MainResults} after the Nitsche and Schatz Theorem, which is an important tool for the proof presented in section \ref{ProofTHM}. In section \ref{ParticularCase} another argument is presented to obtain an optimal estimate in the particular case of the $P_{1}$-finite elements. Finally, we illustrate in section \ref{NumIllus} our theoretical results by some numerical simulations.

\esp

\section{Main results.}\label{MainResults}

In this section, we define all the notations used in this paper, formulate our main results and recall an important tool for the proof, the Nitsche and Schatz Theorem.

\subsection{Notations.}\label{notations}

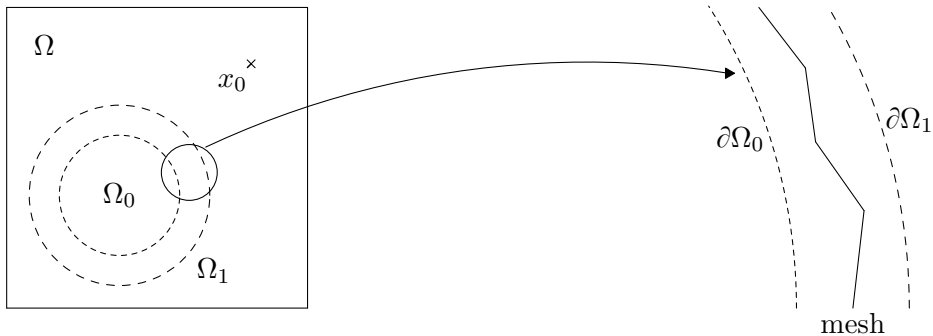
\begin{figure}[H]
\centering

\begin{tikzpicture}[line cap=round,line join=round,>=triangle 45,x=1.0cm,y=1.0cm]
\clip(-5.,-1.) rectangle (9.,5.);
\draw (-4.,4.)-- (0.,4.);
\draw (0.,0.)-- (0.,4.);
\draw (-4.,4.)-- (-4.,0.);
\draw (-4.,0.)-- (0.,0.);
\draw [dash pattern=on 2pt off 2pt] (-2.5,1.5) circle (0.8cm);
\draw [dash pattern=on 3pt off 3pt] (-2.5,1.5) circle (1.2cm);
\draw [shift={(-1.,0.)},dash pattern=on 2pt off 2pt]  plot[domain=0.:0.565701468656,variable=\t]({1.*7.5*cos(\t r)+0.*7.5*sin(\t r)},{0.*7.5*cos(\t r)+1.*7.5*sin(\t r)});
\draw [shift={(0.,0.)},dash pattern=on 3pt off 3pt]  plot[domain=0.:0.525366873815,variable=\t]({1.*8.*cos(\t r)+0.*8.*sin(\t r)},{0.*8.*cos(\t r)+1.*8.*sin(\t r)});
\draw [shift={(3.71469847741,-8.64788700799)}] plot[domain=1.41229472108:2.01,variable=\t]({1.*11.9218984257*cos(\t r)+0.*11.9218984257*sin(\t r)},{0.*11.9218984257*cos(\t r)+1.*11.9218984257*sin(\t r)});
\draw (6.,4.)-- (6.61832985787,3.19474326392);
\draw (6.61832985787,3.19474326392)-- (6.75392766617,2.21165915376);
\draw (6.75392766617,2.21165915376)-- (7.39801725558,1.29637394775);
\draw (7.39801725558,1.29637394775)-- (7.25,0.);
\draw(-1.57218801702,1.80450020058) circle (0.371326175124cm);
\draw (-0.75,3.25)-- ++(-1.5pt,-1.5pt) -- ++(3.0pt,3.0pt) ++(-3.0pt,0) -- ++(3.0pt,-3.0pt);
\draw (-1,3) node {$x_{0}$};
\draw[color=black] (-3.5,3.5) node {$\Omega$};
\draw[color=black] (-2.5,1.5) node {$\Omega_{0}$};
\draw (-1.25,0.5) node {$\Omega_{1}$};
\draw[color=black] (5.75,2.25) node {$\partial \Omega_{0}$};
\draw[color=black] (8,2.5) node {$\partial \Omega_{1}$};
\draw [fill=black,shift={(5.59643629193,3.12456918732)},rotate=270] (0,0) ++(0 pt,2.25pt) -- ++(1.94855715851pt,-3.375pt)--++(-3.89711431703pt,0 pt) -- ++(1.94855715851pt,3.375pt);
\draw (7.25,-0.2) node {mesh};
\end{tikzpicture}

\caption{Domains $\Omega_{0}$ and $\Omega_{1}$.}
\label{hypNS}
\end{figure}

For a domain $D$, we will denote by $\| \cdot \|_{s,p,D}$ (respectively $| \cdot |_{s,p,D}$) the norm (respectively the semi-norm) of the Sobolev space $W^{s,p}(D)$, while $\| \cdot \|_{s,D}$ (respectively $| \cdot |_{s,D}$) will stand for the norm (respectively the semi-norm) of the Sobolev space $H^{s}(D)$.

For the numerical solution, let us introduce a family of quasi-uniform simplicial triangulations $\mathcal{T}_{h}$ of $\Omega$ and an order $k$ finite element space $V_{h}^k \subset H_{0}^1(\Omega)$. To ensure that the numerical solution is well-defined, the space $V_{h}^k$ is assumed to contain only continuous functions. The finite element solution $u_{\delta}^h \in V_{h}^k$ of problem $(P_{\delta})$ is defined by
\begin{equation}\label{Galerkinproj}
\int_{\Omega} \nabla u_{\delta}^h \cdot \nabla v_{h} = v_{h} (x_{0}), \ \forall v_{h} \in V_{h}^k.
\end{equation}
We will also evaluate the $H^s$-norm of the error on a subdomain of $\Omega$ which does not contain the singularity, for $s \geqslant 2$, and, whenever we do so, we will of course assume the finite elements $H^s$-conforming. We fix two subdomains of $\Omega$, named $\Omega_{0}$ and $\Omega_{1}$, such that $\Omega_{0} \subset \subset \Omega_{1} \subset \subset \Omega$ and $x_{0} \notin \overline{\Omega_{1}}$ (see Figure~\ref{hypNS}). We consider a mesh which satisfies the following condition:
\begin{hypo}\label{maillage}
For some $h_{0}$, we have for all $0 < h \leqslant h_{0}$ (see Figure \ref{hypNS}),
\begin{equation*}
\overline{\Omega}_{0}^m \medcap \Omega_{1}^{c} = \emptyset, \text{ where } \overline{\Omega}_{0}^m = \dis{\bigcup_{\substack{T \in \mathcal{T}_{h} \\[0.2mm] T \scalebox{0.625}{\ensuremath{\bigcap}} \Omega_{0} \neq \emptyset}}} T,
\end{equation*}
and $\Omega_{1}^{c}$ is the complement of $\Omega_{1}$ in $\Omega$.
\end{hypo}

\esp

\subsection{Regularity of the solution $u_{\delta}$.}\label{PartReg}
In this subsection, we focus on the singularity of the solution, which is the main difficulty in the study of this kind of problems. In dimension 2, problem $(P_{\delta})$  has a unique variational solution $u_{\delta} \in W^{1,p}_{0}(\Omega)$ for all $p\in [1,2[$ (see for instance \cite{ArBe}). Indeed, denoting by $G$ the Green function, $G$ is defined by
\begin{equation*}
G(x) = - \dis{\frac{1}{2\pi}} \log (| x |).
\end{equation*}
This function $G$ satisfies $-\triangle G = \delta_{0}$, so that $G(\cdot - x_{0})$ contains the singular part of $u_{\delta}$. As it is done in \cite{ArBe}, the solution $u_{\delta}$ can be built by adding to $G(\cdot - x_{0})$ a corrector term $\omega \in H^1(\Omega)$, solution of the Laplace( problem
\begin{equation}\label{poisson}
\left\{ \begin{array}{rccl} -\triangle \omega & = & 0 & \text{in } \Omega, \\ \omega & = & -G(\cdot-x_{0}) & \text{on } \partial \Omega. \end{array} \right.
\end{equation}
Then, the solution is given by
\begin{equation*}
u_{\delta}(x) = G(x-x_{0}) + \omega(x) = -\frac{1}{2\pi}\log(| x-x_{0} |) + \omega(x).
\end{equation*}

It is easy to verify that $u_{\delta} \notin H_{0}^{1}(\Omega)$. Actually, we can specify how the quantity $\| u_{\delta} \|_{1,p,\Omega}$ goes to infinity when $p$ goes to $2$, with $p <2$. According to the foregoing, if we write $u_{\delta} = G + \omega$, since $\omega \in H^1(\Omega)$, estimating $\|u_{\delta} \|_{1,p,\Omega}$ when $p$ converges to $2$ from below (which will be denoted by $p \nearrow 2$) is reduced to estimate $\| G \|_{1,p,B}$, where $B = B(0,1)$: $G \in \L^p(\Omega)$ for all $1 \leqslant p < \infty$, and using polar coordinates, we get, for $p < 2$,
\begin{equation*}
| G |_{1,p,B}^p = \int_{B} | \nabla G (x) |^p {\tn d}x = \int_{0}^1 \int_{0}^{2\pi} \left(\frac{1}{2\pi} \frac{1}{r} \right)^p r{\tn d}\theta {\tn d}r = (2\pi)^{1-p} \int_{0}^1 r^{1-p} {\tn d}r = \frac{(2\pi)^{1-p}}{2-p}.
\end{equation*}
Finally, when $p \nearrow 2 $,
\begin{equation}\label{equivud2}
\| u_{\delta} \|_{1,p,\Omega} \sim \frac{1}{\sqrt{2\pi}} \frac{1}{\sqrt{2-p}}.
\end{equation}

\esp

\subsection{Nitsche and Schatz Theorem.}\label{PartNS}
Before stating the Nitsche and Schatz Theorem, let us introduce some known properties of the finite element spaces $V_{h}^k$.

\begin{hypo}\label{hypoNS}
Given two fixed concentric spheres $B_{0}$ and $B$ with $B_{0} \subset \subset B \subset \subset \Omega$, there exists an $h_{0}$ such that for all $0<h\leqslant h_{0}$, we have for some $R \geqslant 1$ and $M > 1$:
\begin{itemize}
\item[{\bf B1}] For any $0 \leqslant s \leqslant R$ and $s \leqslant \ell \leqslant M$, for each $u \in H^{\ell}(B)$, there exists $\eta \in V_{h}^k$ such that
\begin{equation*}
\| u - \eta \|_{s,B} \leqslant C h^{\ell-s} \| u \|_{\ell,B}.
\end{equation*}
Moreover, if $u \in H^{1}_{0}(B_{0})$ then $\eta$ can be chosen to satisfy $\eta \in H^1_{0}(B)$.
\item[{\bf B2}] Let $\varphi \in \mathscr{C}^{\infty}_{0}(B_{0})$ and $u_{h} \in V_{h}^k$, then there exists $\eta \in V_{h}^k \medcap H_{0}^{1}(B)$ such that
\begin{equation*}
\| \varphi u_{h} - \eta \|_{1,B} \leqslant C(\varphi,B,B_{0}) h \| u_{h} \|_{1,B}.
\end{equation*}
\item[{\bf B3}] For each $h \leqslant h_{0}$ there exists a domain $B_{h}$ with $B_{0} \subset \subset B_{h} \subset \subset B$ such that if $0 \leqslant s \leqslant \ell \leqslant R$ then for all $u_{h} \in V_{h}^k$ we have
\begin{equation*}
\| u_{h} \|_{\ell,B_{h}} \leqslant C h^{s-\ell} \| u_{h} \|_{s,B_{h}}.
\end{equation*}
\end{itemize}
\end{hypo}

We now state the following theorem, a key tool in the forthcoming proof of Theorem \ref{THM}.

\begin{thm*}[Nitsche and Schatz \cite{NiSc}]\label{NS}
Let $\Omega_{0} \subset \subset \Omega_{1} \subset \subset \Omega$ and let $V_{h}^k$ satisfy Assumption \ref{hypoNS}. Let $u \in H^{\ell}(\Omega_{1})$, let $u_{h} \in V_{h}^k$ and let $q$ be a nonnegative integer, arbitrary but fixed. Let us suppose that $u-u_{h}$ satisfies
\begin{equation*}
\int_{\Omega} \nabla (u-u_{h}) \cdot \nabla v_{h} = 0,\ \forall v_{h} \in V_{h}^k \medcap H_{0}^1(\Omega_{1}).
\end{equation*}
Then there exists $h_{1}$ such that if $h \leqslant h_{1}$ we have
\begin{enumerate}[(i)]
\item for $s=0,1$ and $1 \leqslant \ell \leqslant M$,
\begin{equation*}
\| u-u_{h} \|_{s,\Omega_{0}} \leqslant C \left( h^{\ell-s} \| u \|_{\ell,\Omega_{1}} + \|u - u_{h} \|_{-q,\Omega_{1}} \right),
\end{equation*}
\item for $2 \leqslant s \leqslant \ell \leqslant M$ and $s \leqslant k < R$,
\begin{equation*}
\| u-u_{h} \|_{s,\Omega_{0}} \leqslant C \left( h^{\ell-s} \| u \|_{\ell,\Omega_{1}} + h^{1-s} \|u - u_{h} \|_{-q,\Omega_{1}} \right).
\end{equation*}
\end{enumerate}
\end{thm*}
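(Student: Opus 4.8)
The plan is to derive everything from a single local energy estimate for the interior-orthogonal error $e := u - u_h$, and then to move the Sobolev index $s$ upward by the inverse inequality \textbf{B3} and the remainder norm downward to $-q$ by an iterated duality argument. Throughout, $a(v,w) := \int_{\Omega} \nabla v \cdot \nabla w$, and the hypothesis is precisely $a(e, v_h) = 0$ for all $v_h \in V_h^k \medcap H_0^1(\Omega_1)$.

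First I would fix a smooth cutoff $\omega$ with $\omega \equiv 1$ on $\Omega_0$ and $\operatorname{supp}\omega \subset\subset \Omega_1'$ for an intermediate shell $\Omega_0 \subset\subset \Omega_1' \subset\subset \Omega_1$, and start from the pointwise identity $|\nabla(\omega e)|^2 = \nabla e \cdot \nabla(\omega^2 e) + e^2 |\nabla\omega|^2$, i.e.
\[
\| \nabla(\omega e) \|_{0,\Omega_1'}^2 = \int_{\Omega_1'}\nabla e \cdot \nabla(\omega^2 e) + \int_{\Omega_1'} |\nabla\omega|^2 e^2 .
\]
The orthogonality enters in the first term: writing $\omega^2 e = \omega^2(u-\chi) + \omega^2(\chi - u_h)$ with $\chi \in V_h^k$ the approximant of $u$ from \textbf{B1}, I subtract a discrete function. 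The part $\omega^2(\chi - u_h)$ is treated by the superapproximation property \textbf{B2} (with $\varphi = \omega^2$ and $u_h$ replaced by $\chi - u_h$), which yields $\eta \in V_h^k \medcap H_0^1$ with $\|\omega^2(\chi-u_h) - \eta\|_{1} \leqslant C h \|\chi - u_h\|_{1}$, so that $a(e,\omega^2(\chi-u_h)) = a(e, \omega^2(\chi-u_h) - \eta)$ is $O(h)$ small relative to $\|\nabla(\omega e)\|$; the part $\omega^2(u-\chi)$ is bounded directly by \textbf{B1}. After expanding the cross terms with Cauchy--Schwarz and absorbing the $\|\nabla(\omega e)\|$ contributions on the left (legitimate once $h \leqslant h_1$, because of the $O(h)$ factor from \textbf{B2}), I obtain the basic estimate
\[
\| e \|_{1,\Omega_0} \leqslant C\left( h^{\ell-1} \| u \|_{\ell,\Omega_1} + \| e \|_{0,\Omega_1'} \right).
\]

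To replace the $\L^2$ remainder by the $H^{-q}$ remainder, I would iterate a local duality argument on a finite chain $\Omega_0 = G_0 \subset\subset G_1 \subset\subset \cdots \subset\subset G_{q+1} = \Omega_1$. At each level, given $\phi \in \mathscr{C}_0^\infty(G_j)$ I solve $-\triangle w = \phi$ globally, use $(e,\phi) = a(e,w)$, split $w$ with a cutoff equal to $1$ near $G_j$, apply the orthogonality to the inner piece (gaining a power of $h$ via \textbf{B1}) and the basic estimate to the annular piece. This trades one unit of regularity in the remainder for a slightly larger domain and one extra factor of $h$, so that stacking the $q+1$ levels converts $\|e\|_{0,\Omega_1'}$ into $C(h^{\ell-1}\|u\|_{\ell,\Omega_1} + \|e\|_{-q,\Omega_1})$; this proves the $s=1$ case of (i). The $s=0$ case of (i) follows from one further duality step, now testing the gradient of the dual solution against $\nabla e$ and invoking the $s=1$ bound just obtained.

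For part (ii) with $s \geqslant 2$, I would localize once more on $\Omega_0 \subset\subset \Omega_0' \subset\subset \Omega_1$ and split $e = (u - \chi) + (\chi - u_h)$. The smooth part gives $\|u-\chi\|_{s,\Omega_0} \leqslant C h^{\ell-s}\|u\|_{\ell,\Omega_1}$ by \textbf{B1} (using $H^s$-conformity and $s \leqslant k$). For the discrete part $\chi - u_h \in V_h^k$, the inverse inequality \textbf{B3} gives $\|\chi - u_h\|_{s,\Omega_0} \leqslant C h^{1-s}\|\chi - u_h\|_{1,\Omega_0'}$, and bounding $\|\chi - u_h\|_{1,\Omega_0'} \leqslant \|u-\chi\|_{1,\Omega_0'} + \|e\|_{1,\Omega_0'}$ with the $s=1$ case of (i) reproduces the claimed $h^{\ell-s}\|u\|_{\ell,\Omega_1} + h^{1-s}\|e\|_{-q,\Omega_1}$.

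The main obstacle is the energy estimate of the second paragraph: one must arrange the cutoff shells so that every auxiliary domain stays inside $\Omega_1$, control the cutoff-dependent constants $C(\omega)$ as $\omega$ is re-cut through the $q+1$ iteration levels (so they do not compound uncontrollably), and verify that the $O(h)$ gain from \textbf{B2} genuinely dominates the constant multiplying $\|\nabla(\omega e)\|$ so the absorption is valid for $h \leqslant h_1$. The local duality of the third paragraph is the second delicate point, since the dual solutions are only localized by cutoffs and thereby regenerate commutator terms of exactly the type handled by the basic estimate --- closing this loop cleanly is what forces the chain of domains rather than a single pair.
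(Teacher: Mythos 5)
First, a point of order: the paper does not prove this statement at all. It is quoted as a known theorem of Nitsche and Schatz \cite{NiSc} and used as a black box in the proofs of Theorems \ref{THM} and \ref{THM1}, so your attempt can only be measured against the original argument of \cite{NiSc}. At the level of architecture you have reconstructed that argument correctly: a cutoff-based interior energy estimate whose discrete term is killed by superapproximation (\textbf{B2}), an iterated local duality argument to lower the remainder norm to $H^{-q}$, and approximation (\textbf{B1}) plus the inverse property (\textbf{B3}) to reach $s \geqslant 2$.

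However, the step you yourself flag as the ``main obstacle'' is a genuine gap, not a matter of verification: the absorption that is supposed to close your basic energy estimate does not work. Superapproximation gives $|a(e,\omega^2(\chi-u_h))| = |a(e,\omega^2(\chi-u_h)-\eta)| \leqslant C h\, \|\nabla e\|_{0,B}\, \|\chi-u_h\|_{1,B}$, where $B \supset \operatorname{supp}\omega$ is strictly larger than the region where $\omega \equiv 1$. Neither factor is dominated by $\|\nabla(\omega e)\|_{0}$, because $\omega$ degenerates exactly on the annulus where $\nabla e$ and $\chi-u_h$ still live; so no choice of $h_1$ lets you move this term to the left-hand side, and what you really obtain is $\|e\|_{1,\Omega_0}^2 \leqslant C h^{2(\ell-1)}\|u\|^2_{\ell,\Omega_1} + C\|e\|^2_{0,B} + C h \|e\|_{1,B}^2$, with the dangerous term living on a strictly larger domain and not controlled by anything in the target estimate. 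The Nitsche--Schatz proof closes this not by absorption but by iterating this very inequality over a finite chain of nested domains, so that powers of $h$ accumulate, and then terminating the chain with the inverse property \textbf{B3}, which trades a factor of $h$ for one derivative of a discrete function. It is telling that your plan for part (i) never invokes \textbf{B3}: that assumption is needed already for $s=0,1$, and its absence is the symptom of the unclosed kickback. (The chain of domains you do introduce serves only to lower the norm index in the duality step; it does not repair the energy estimate.) A second, smaller flaw: you ``solve $-\triangle w = \phi$ globally and use $(e,\phi)=a(e,w)$'', but $u$, hence $e$, is only defined on $\Omega_1$ (the hypothesis is $u \in H^{\ell}(\Omega_1)$), so the global pairing $a(e,w)$ is meaningless; the duality identity must be written from the outset in the cutoff form $\int (\omega e)\phi = \int \nabla(\omega e)\cdot\nabla w$, whose commutator terms are then handled as you indicate in your closing paragraph.
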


In this paper, we will actually need a more general version of the assumptions on the approximation space $V_{h}^k$:

\begin{hypo}\label{hypoGene}
Given $B \subset \Omega$, let be $p' \geqslant 2$, there exists an $h_{0}$ such that for all $0<h\leqslant h_{0}$, we have for some $R \geqslant 1$ and $M > 1$:
\begin{itemize}
\item[$\widetilde{\rm {\bf B}}${\bf 1}] For any $0 \leqslant s \leqslant R$ and $s \leqslant \ell \leqslant M$, for each $u \in H^{\ell}(B)$, there exists $\eta \in V_{h}^k$ such that, for any finite element $T \subset B$,
\begin{equation*}
| u - \eta |_{s,p',T} \leqslant C h^{d(1/p' - 1/2)} h^{\ell-s} | u |_{\ell,2,T}.
\end{equation*}
\item[$\widetilde{\rm {\bf B}}${\bf 3}] For $0 \leqslant s \leqslant \ell \leqslant R$, for all $u_{h} \in V_{h}^k$, for any finite element $T$ in the family $\mathcal{T}_{h}$, we have
\begin{equation*}
\| u_{h} \|_{\ell,p',T} \leqslant C h^{d(1/p' - 1/2)} h^{s-\ell} \| u_{h} \|_{s,2,T}.
\end{equation*}
\end{itemize}
\end{hypo}

Assumptions $\widetilde{\rm {\bf B}}${\bf 1} and $\widetilde{\rm {\bf B}}${\bf 3} are generalisations of assumptions {\bf B1} and {\bf B3}. They are quite standard and satisfied by a wide variety of approximation spaces, including all finite element spaces defined on quasi-uniform meshes \cite{PCi}. The parameters $R$ and $M$ play respectively the role of the regularity and order of approximation of the approximation space $V_{h}^k$. For example, in the case of $P_{1}$-finite elements, we have $R=3/2-\varepsilon$ and $M=2$. Assumption {\bf B2} is less common but also satisfied by a wide class of approximation spaces. Actually, for Lagrange and Hermite finite elements, a stronger property than assumption {\bf B2} is shown in \cite{SBe}: let $0 \leqslant s \leqslant \ell \leqslant k$, $\varphi \in \mathscr{C}^{\infty}_{0}(B)$ and $u_{h} \in V_{h}^k$, then there exists $\eta \in V_{h}^k$ such that
\begin{equation}\label{comdis}
\| \varphi u_{h} - \eta \|_{s,B} \leqslant C(\varphi) h^{\ell-s+1} \| u_{h} \|_{\ell,B}.
\end{equation}
Applied for $s=\ell=1$, inequality \eqref{comdis} gives assumption {\bf B2}.

\esp

\subsection{Statement of our main results.}
Our main results are Theorems \ref{THM} and Theorem \ref{THM1}. The rest of the paper is mostly concerned by the proof and the illustration of the following theorems.

\begin{thm}\label{THM}
Let $\Omega_{0} \subset \subset \Omega_{1} \subset \subset \Omega$ satisfy Assumption \ref{maillage}, $ 1 \leqslant s \leqslant k$. Let $u_{\delta}$ be the solution of problem $(P_{\delta})$ and $u_{\delta}^h$ its Galerkin projection onto $V_{h}^k$, satisfying \eqref{Galerkinproj}. Under Assumptions \ref{hypoNS} and \ref{hypoGene}, there exists $h_{1}$ such that if $0 < h \leqslant h_{1}$, we have,
\begin{equation}\label{INEQTHM}
\| u_{\delta} - u_{\delta}^{h} \|_{1,\Omega_{0}} \leqslant C(\Omega_{0},\Omega_{1},\Omega) h^{k} \sqrt{| \ln h |}.
\end{equation}
In addition, for $s \geqslant 2$, if the finite elements are supposed $H^k$-conforming, we have
\begin{equation}\label{INEQTHMk}
\| u_{\delta} - u_{\delta}^{h} \|_{s,\Omega_{0}} \leqslant C(\Omega_{0},\Omega_{1},\Omega) h^{k+1-s} \sqrt{| \ln h |}.
\end{equation}
\end{thm}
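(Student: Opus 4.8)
The plan is to deduce \eqref{INEQTHM}--\eqref{INEQTHMk} from the Nitsche and Schatz Theorem, the only missing ingredient being a sharp bound on the negative-norm term it produces. First I would record the two structural facts that place us in its setting. On one hand, since $u_\delta^h$ is defined by \eqref{Galerkinproj} and $u_\delta$ satisfies the variational identity $\int_\Omega\nabla u_\delta\cdot\nabla v=v(x_0)$ for every continuous $v\in W_0^{1,p'}(\Omega)$, subtracting gives the global Galerkin orthogonality $\int_\Omega\nabla(u_\delta-u_\delta^h)\cdot\nabla v_h=0$ for all $v_h\in V_h^k$ (in particular for $v_h\in V_h^k\medcap H_0^1(\Omega_1)$). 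On the other hand, since $x_0\notin\overline{\Omega_1}$ and $u_\delta=G(\cdot-x_0)+\omega$ with $G$ smooth away from its pole and $\omega$ harmonic in $\Omega$ by \eqref{poisson}, the solution $u_\delta$ is $\mathscr C^\infty$ in a neighbourhood of $\overline{\Omega_1}$, so $\|u_\delta\|_{\ell,\Omega_1}\leq C(\Omega_1,x_0,\ell)$ for every $\ell$. Applying the Nitsche and Schatz Theorem with $\ell=k+1$ (allowed since $M=k+1$, and for $s\geq 2$ using the $H^k$-conformity to ensure $k<R$) then reduces \eqref{INEQTHM}--\eqref{INEQTHMk} to proving
$$\|u_\delta-u_\delta^h\|_{-q,\Omega_1}\leq C\,h^{k}\sqrt{|\ln h|}$$
for one fixed integer $q$, which I would take large enough (say $q\geq k-1$) that the dual solution below is $H^{k+1}$-regular.

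To estimate this negative norm I would run a weak Aubin--Nitsche duality argument. Fix $\phi\in H_0^q(\Omega_1)$, extended by zero to $\Omega$, and let $\psi\in H^{q+2}(\Omega)\medcap H_0^1(\Omega)$ solve $-\triangle\psi=\phi$ in $\Omega$, with $\|\psi\|_{q+2,\Omega}\leq C\|\phi\|_{q,\Omega_1}$ by elliptic regularity. Integrating by parts — the boundary terms vanishing because both $u_\delta$ and $u_\delta^h$ have zero trace — and then using the global Galerkin orthogonality to subtract an arbitrary $\psi_h\in V_h^k$, I get
$$\int_{\Omega_1}(u_\delta-u_\delta^h)\,\phi=\int_\Omega\nabla(u_\delta-u_\delta^h)\cdot\nabla(\psi-\psi_h).$$
The key point here is that $u_\delta-u_\delta^h\notin H^1(\Omega)$, so I must split the integral with the Hölder pair $1/p+1/p'=1$, $p<2<p'$, rather than with $L^2$:
$$\Big|\int_{\Omega_1}(u_\delta-u_\delta^h)\,\phi\Big|\leq |u_\delta-u_\delta^h|_{1,p,\Omega}\;|\psi-\psi_h|_{1,p',\Omega}.$$

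The two factors are estimated separately. For the second, choosing $\psi_h$ through the generalised approximation property $\widetilde{\mathbf B}\mathbf 1$ of Assumption \ref{hypoGene} with $\ell=k+1$, summing the elementwise bounds and using $\ell^2\hookrightarrow\ell^{p'}$ (valid since $p'\geq 2$), I obtain $|\psi-\psi_h|_{1,p',\Omega}\leq C\,h^{2(1/p'-1/2)}h^{k}\|\psi\|_{k+1,\Omega}\leq C\,h^{1-2/p}h^{k}\|\phi\|_{q,\Omega_1}$. For the first factor, $u_\delta^h$ is exactly the Galerkin (Ritz) projection of $u_\delta$, so the discrete inf-sup condition — i.e.\ the $W^{1,p}$-stability of this projection on quasi-uniform meshes, with a constant uniform for $p$ in a left-neighbourhood of $2$ — together with the explicit blow-up \eqref{equivud2} gives $|u_\delta-u_\delta^h|_{1,p,\Omega}\leq C\,|u_\delta|_{1,p,\Omega}\leq C\,(2-p)^{-1/2}$. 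Collecting,
$$\|u_\delta-u_\delta^h\|_{-q,\Omega_1}\leq C\,(2-p)^{-1/2}\,h^{1-2/p}\,h^{k}.$$

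It remains to optimise the free exponent $p\in(1,2)$, which is where the logarithmic factor is produced and which I expect to be the delicate step. Writing $1/p=1/2+\eta$ with $\eta>0$ small, one has $2-p\sim 4\eta$ and $h^{1-2/p}=h^{-2\eta}=e^{2\eta|\ln h|}$, so the $p$-dependent prefactor behaves like $\eta^{-1/2}e^{2\eta|\ln h|}$; minimising over $\eta$ gives the optimal choice $\eta\sim 1/(4|\ln h|)$ (that is, $p\nearrow 2$ at the rate dictated by $h$) and the value $C\sqrt{|\ln h|}$. This yields exactly $\|u_\delta-u_\delta^h\|_{-q,\Omega_1}\leq C\,h^{k}\sqrt{|\ln h|}$, and substituting into the two cases of the Nitsche and Schatz Theorem finishes the proof: for $s=1$, $\|u_\delta-u_\delta^h\|_{1,\Omega_0}\leq C(h^{k}+h^{k}\sqrt{|\ln h|})$, and for $2\leq s\leq k$, $\|u_\delta-u_\delta^h\|_{s,\Omega_0}\leq C(h^{k+1-s}+h^{1-s}h^{k}\sqrt{|\ln h|})$, both bounded by $C\,h^{k+1-s}\sqrt{|\ln h|}$. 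The main obstacle is the uniform (in $p$) $W^{1,p}$-control of the projection error via the discrete inf-sup condition, and the balancing of $(2-p)^{-1/2}$ against $h^{1-2/p}$ to extract precisely the $\sqrt{|\ln h|}$ loss rather than a genuine power of $h$.
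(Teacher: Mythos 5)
Your proposal follows the paper's route almost step for step: Galerkin orthogonality and the interior smoothness of $u_{\delta}$ feed the Nitsche and Schatz Theorem with $\ell = k+1$, $q = k-1$; the negative norm is treated by a weak Aubin--Nitsche duality with the H\"older pair $(p,p')$; the dual approximation error is bounded elementwise via $\widetilde{\mathbf{B}}\mathbf{1}$ and the norm comparison $\| \cdot \|_{\ell^{p'}} \leqslant \| \cdot \|_{\ell^{2}}$; and the logarithm is extracted by letting $p \nearrow 2$ at a rate tied to $h$ and balancing against the blow-up \eqref{equivud2}. This is exactly the content of Lemma \ref{ANweak}, its corollary, and the final optimization $\varepsilon = | \ln h |^{-1}$ in the paper. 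Your exponent bookkeeping ($h^{1-2/p} = h^{-2\eta}$, optimal $\eta \sim 1/|\ln h|$, value $\sqrt{|\ln h|}$) is correct.

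The one step that does not hold as written is the bound $| u_{\delta} - u_{\delta}^h |_{1,p,\Omega} \leqslant C | u_{\delta} |_{1,p,\Omega}$ with $C$ uniform in $h$ and in $p$ near $2$, which you present as ``the discrete inf-sup condition.'' These are not the same thing. The discrete inf-sup condition that can actually be proved from the paper's assumptions --- continuous inf-sup for the $W^{1,p}_{0}$--$W^{1,p'}_{0}$ pairing, $H^{1}_{0}$-stability of the Galerkin projection $\Pi_{h}$, and the inverse estimate $\widetilde{\mathbf{B}}\mathbf{3}$ --- is \emph{not} uniform in $h$: its constant is $C h^{\varepsilon}$ (Lemma \ref{infsupdiscrete}), so the resulting $W^{1,p}$-stability of the Ritz projection costs an extra factor $h^{-2(1/2-1/p')} = h^{-\varepsilon}$, as in inequality \eqref{ineqinterm2}. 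A genuinely $h$-uniform $W^{1,p}$-stability for $p$ in a neighbourhood of $2$ on quasi-uniform meshes is true, but it is a deep Rannacher--Scott-type result, far beyond assumptions $\widetilde{\mathbf{B}}\mathbf{1}$/$\widetilde{\mathbf{B}}\mathbf{3}$, and you neither prove it nor cite it --- indeed you flag it yourself as the main obstacle. The good news is that you do not need it: inserting the $h$-degraded stability instead, your collected bound becomes $C \varepsilon^{-1/2} h^{-2\varepsilon} h^{k}$ rather than $C \varepsilon^{-1/2} h^{-\varepsilon} h^{k}$, and the very same choice $\varepsilon = | \ln h |^{-1}$ still absorbs the extra factor ($h^{-2\varepsilon} = e^{2}$) and yields $C h^{k} \sqrt{| \ln h |}$. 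Repairing your argument this way reproduces precisely the paper's Lemmas \ref{infsupdiscrete} and the estimate \eqref{ineqinterm3}; as written, however, the uniform-stability claim is an unproven (and unnecessary) strengthening on which your whole negative-norm estimate rests.
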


We can show a stronger result than Theorem \ref{THM} for $P_{1}$-finite elements. With the same notations and under the same assumptions,

\begin{thm}\label{THM1}
The $P_{1}$-finite element method converges with the order 1 for $H^1(\Omega_{0})$-norm. More precisely:
\begin{equation*}
\| u_{\delta} - u_{\delta}^{h} \|_{1,\Omega_{0}} \leqslant C(\Omega_{0},\Omega_{1},\Omega)h.
\end{equation*}
\end{thm}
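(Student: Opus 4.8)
\emph{Proof proposal.} The plan is to feed the Nitsche--Schatz estimate~(i) the regularity that $u_{\delta}$ enjoys \emph{away} from $x_{0}$, and to collapse the resulting negative-norm remainder to a single pointwise quantity by a duality argument. Since $x_{0}\notin\overline{\Omega_{1}}$, the function $u_{\delta}=G(\cdot-x_{0})+\omega$ is smooth on a neighbourhood of $\overline{\Omega_{1}}$, so $\|u_{\delta}\|_{2,\Omega_{1}}\leqslant C(\Omega_{1})<\infty$. The error satisfies the Galerkin orthogonality $\int_{\Omega}\nabla(u_{\delta}-u_{\delta}^{h})\cdot\nabla v_{h}=0$ for all $v_{h}\in V_{h}^{1}$ (indeed, by the weak formulation $\int_{\Omega}\nabla u_{\delta}\cdot\nabla v_{h}=v_{h}(x_{0})$, which matches \eqref{Galerkinproj}), hence a fortiori for $v_{h}\in V_{h}^{1}\medcap H_{0}^{1}(\Omega_{1})$, so the Nitsche--Schatz theorem applies. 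For $P_{1}$ elements $M=2$, so taking $s=1$ and $\ell=2$ in~(i) yields, for any fixed nonnegative integer $q$,
\[
\|u_{\delta}-u_{\delta}^{h}\|_{1,\Omega_{0}}\leqslant C\big(h\,\|u_{\delta}\|_{2,\Omega_{1}}+\|u_{\delta}-u_{\delta}^{h}\|_{-q,\Omega_{1}}\big)\leqslant C\big(h+\|u_{\delta}-u_{\delta}^{h}\|_{-q,\Omega_{1}}\big).
\]
Everything therefore reduces to proving that, for a suitable $q$, the negative norm is $o(h)$; I will take $q=2$.

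The heart of the argument is a clean evaluation of this negative norm. Fix $\phi\in H_{0}^{2}(\Omega_{1})$, let $w\in H_{0}^{1}(\Omega)$ solve the dual problem $-\triangle w=\phi$ in $\Omega$, and let $w_{h}\in V_{h}^{1}$ be its Ritz projection, characterised by $\int_{\Omega}\nabla(w-w_{h})\cdot\nabla v_{h}=0$ for all $v_{h}\in V_{h}^{1}$. Since $\mathrm{supp}\,\phi\subset\Omega_{1}$, I write $\int_{\Omega_{1}}(u_{\delta}-u_{\delta}^{h})\phi=\int_{\Omega}u_{\delta}\phi-\int_{\Omega}u_{\delta}^{h}\phi$ and treat the two terms separately. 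For the exact solution, integrating by parts through the singularity (excising a ball $B_{\varepsilon}(x_{0})$, using $-\triangle u_{\delta}=\delta_{x_{0}}$ and letting $\varepsilon\to 0$; the contributions on $\partial\Omega$ vanish because $u_{\delta},w\in H_{0}^{1}$, while those on $\partial B_{\varepsilon}$ converge to the point mass) gives $\int_{\Omega}u_{\delta}\phi=\int_{\Omega}u_{\delta}(-\triangle w)=\langle\delta_{x_{0}},w\rangle=w(x_{0})$. For the discrete solution, as $u_{\delta}^{h}\in H_{0}^{1}(\Omega)$ a genuine integration by parts gives $\int_{\Omega}u_{\delta}^{h}\phi=\int_{\Omega}\nabla u_{\delta}^{h}\cdot\nabla w$; Ritz orthogonality replaces $w$ by $w_{h}$ (the difference $\int_{\Omega}\nabla u_{\delta}^{h}\cdot\nabla(w-w_{h})$ vanishes since $u_{\delta}^{h}\in V_{h}^{1}$), and testing \eqref{Galerkinproj} against $w_{h}\in V_{h}^{1}$ gives $\int_{\Omega}\nabla u_{\delta}^{h}\cdot\nabla w_{h}=w_{h}(x_{0})$. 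Subtracting, I obtain the identity
\[
\int_{\Omega_{1}}(u_{\delta}-u_{\delta}^{h})\,\phi=(w-w_{h})(x_{0}).
\]
This is the weak Aubin--Nitsche step: the entire singular interaction has collapsed to the pointwise Ritz error of the \emph{smooth} dual solution $w$, which is real-analytic near $x_{0}$ because $x_{0}\notin\mathrm{supp}\,\phi$.

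It remains to estimate $(w-w_{h})(x_{0})$. Bounding it by $\|w-w_{h}\|_{0,\infty,\Omega}$ and invoking the classical maximum-norm estimate for the $P_{1}$ Ritz projection on quasi-uniform meshes, $\|w-w_{h}\|_{0,\infty,\Omega}\leqslant C\,h^{2}|\ln h|\,\|w\|_{2,\infty,\Omega}$, together with elliptic regularity and the embedding $H^{4}(\Omega)\hookrightarrow W^{2,\infty}(\Omega)$ in dimension $2$ (whence $\|w\|_{2,\infty,\Omega}\leqslant C\|w\|_{4,\Omega}\leqslant C\|\phi\|_{2,\Omega_{1}}$), I get $|(w-w_{h})(x_{0})|\leqslant C\,h^{2}|\ln h|\,\|\phi\|_{2,\Omega_{1}}$. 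Taking the supremum over $\phi$ yields $\|u_{\delta}-u_{\delta}^{h}\|_{-2,\Omega_{1}}\leqslant C\,h^{2}|\ln h|$, which is $o(h)$. Substituting into the first display gives $\|u_{\delta}-u_{\delta}^{h}\|_{1,\Omega_{0}}\leqslant C(h+h^{2}|\ln h|)\leqslant C\,h$, as claimed.

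The main obstacle is the rigorous justification of the duality identity across the singularity: one must verify that $w-w_{h}$ is an admissible test function for the $W^{1,p}$-weak formulation of $(P_{\delta})$ (it is, being Lipschitz and vanishing on $\partial\Omega$) and that the excision limit on $\partial B_{\varepsilon}(x_{0})$ produces exactly $w(x_{0})$ with no spurious term, which relies on $w$ being continuous at $x_{0}$ while $\nabla u_{\delta}$ has only integrable $1/r$ growth there. The second delicate point is that the improvement hinges on the \emph{sharp} $h^{2}|\ln h|$ maximum-norm estimate for the Ritz projection; the surviving logarithm is harmless precisely because it multiplies $h^{2}$ and is therefore dominated by the $O(h)$ term coming from Nitsche--Schatz. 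If one prefers to avoid citing the pointwise estimate directly, the same bound on $(w-w_{h})(x_{0})$ can be obtained through a discrete inf-sup argument in weighted norms; either route closes the proof.
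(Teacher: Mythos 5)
Your proof is correct, but it takes a genuinely different route from the paper's. The paper never estimates a negative norm of $u_{\delta}-u_{\delta}^{h}$ for this theorem: it replaces the Dirac mass by the ball average $f_{\varepsilon}=\frac{1}{\pi\varepsilon^{2}}\mathbbm{1}_{B_{\varepsilon}}$ with $\varepsilon\sim h/10$, and proves two \emph{exact} coincidences --- $u_{\delta}=u_{\varepsilon}$ outside $B_{\varepsilon}$ (mean value property of harmonic functions, Theorem \ref{bal}), and $u_{\delta}^{h}=u_{\varepsilon}^{h}$ because the $P_{1}$ basis functions are affine, hence harmonic, on the triangle $T_{0}\supset B_{\varepsilon}$, so the two discretized right-hand sides are identical (Lemma \ref{sameRHS}, under the mesh Assumption \ref{hypBeTo}). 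Nitsche and Schatz is then applied to the \emph{regular} error $u_{\varepsilon}-u_{\varepsilon}^{h}$, whose global $\L^{2}$-norm is controlled by the standard Aubin--Nitsche bound $Ch^{2}\|f_{\varepsilon}\|_{0,\Omega}\leqslant Ch$. You instead apply Nitsche and Schatz directly to $u_{\delta}-u_{\delta}^{h}$ (structurally this is the skeleton of the paper's proof of Theorem \ref{THM}, not of Theorem \ref{THM1}) and collapse the negative norm by duality to the pointwise Ritz error $(w-w_{h})(x_{0})$ of the smooth dual solution; your identity $\int_{\Omega_{1}}(u_{\delta}-u_{\delta}^{h})\phi=(w-w_{h})(x_{0})$ is valid (it can be justified most cleanly by testing the $W^{1,p}$ weak formulation with $w\in W^{1,p'}_{0}(\Omega)$, avoiding the excision limit altogether), and it is in essence the mechanism behind the K\"oppl--Wohlmuth-type local estimates invoked in the paper's closing remark of Section \ref{ParticularCase}. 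The trade-offs are instructive. Your argument needs no assumption on the mesh near $x_{0}$, whereas the paper requires $B_{\varepsilon}\subset T_{0}$ and hence possibly a local modification of the mesh; moreover your identity is not tied to $P_{1}$ (with the log-free $h^{k+1}$ pointwise estimate for $k\geqslant 2$ it would give the rate $h^{k}$ for all Lagrange orders), while the paper's Lemma \ref{sameRHS} uses the harmonicity of affine functions and is intrinsically a $P_{1}$ argument. Conversely, the paper's proof is elementary --- only Nitsche--Schatz plus $\L^{2}$ theory --- while yours imports two nontrivial external facts: the Nitsche/Scott/Schatz--Wahlbin maximum-norm estimate $\|w-w_{h}\|_{0,\infty,\Omega}\leqslant Ch^{2}|\ln h|\,\|w\|_{2,\infty,\Omega}$ on quasi-uniform meshes, and $H^{4}$ elliptic regularity of the dual problem, which is immediate for the smooth domain but deserves a word for the square (it still holds there because $\phi$ is supported in $\Omega_{1}\subset\subset\Omega$, so $w$ is harmonic near the corners and odd reflection across the edges of the right angles gives smoothness). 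Both routes close the proof; yours trades elementarity for generality.
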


\esp

\section{Proof of Theorem \ref{THM}.}\label{ProofTHM}
This section is devoted to the proof of Theorem \ref{THM}. We first show a weak version of the Aubin-Nitsche duality lemma (Lemma \ref{ANweak}) and establish a discrete inf-sup condition (Lemma \ref{infsupdiscrete}). Then, we use these results to prove Theorem \ref{THM}

\subsection{Aubin-Nitsche duality lemma with a singular right-hand side.}
The proof of Theorem \ref{THM} is based on Nitsche and Schatz Theorem. In order to estimate the quantity $\| u_{\delta} - u_{\delta}^h \|_{-q,\Omega_{1}}$, we will first show a weak version of Aubin-Nitsche Lemma, in the case of Poisson Problem with a singular right-hand side.

\begin{lem}\label{ANweak}
Let $f \in W^{-1,p}(\Omega) = (W^{1,p'}_{0}(\Omega))'$, $1 < p < 2$, and $u \in W^{1,p}_{0}(\Omega)$ be the unique solution of
\begin{equation*}
\left\{ \begin{array}{rccl}
- \triangle u & = & f &\text{in }\Omega,\\
u & = & 0 &\text{on } \partial \Omega.
\end{array}\right.
\end{equation*}
Let $u_{h} \in V_{h}^k$ be the Galerkin projection of $u$. For finite elements of order $k$, letting $e = u - u_{h}$,  we have for all $0 \leqslant q \leqslant k-1$,
\begin{equation}\label{AubNitWeak}
\| e \|_{-q,\Omega} \leqslant C h^{q+1} h^{2(1/p'-1/2)} | e |_{1,p,\Omega}.
\end{equation}
\end{lem}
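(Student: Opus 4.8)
The plan is to run a duality (Aubin--Nitsche) argument, adapted to the fact that the error $e$ only lies in $W^{1,p}_{0}(\Omega)$ rather than in $H^1_{0}(\Omega)$. I start from the characterization of the negative norm by duality,
\[
\|e\|_{-q,\Omega} = \sup_{g} \frac{\int_{\Omega} e\,g}{\|g\|_{q,\Omega}},
\]
the supremum being taken over $g \in \mathscr{C}^{\infty}_{0}(\Omega)$; the pairing makes sense since $W^{1,p}_{0}(\Omega) \hookrightarrow \L^2(\Omega)$ in dimension $2$ for $p>1$. For a fixed such $g$, I introduce the solution $\phi$ of the dual problem $-\triangle \phi = g$ in $\Omega$, $\phi=0$ on $\partial\Omega$; by elliptic regularity (valid on the smooth domain and on the square for the orders at stake) one has $\phi \in H^{q+2}(\Omega)$ with $\|\phi\|_{q+2,\Omega} \leqslant C \|g\|_{q,\Omega}$.

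Next I integrate by parts and use Galerkin orthogonality. Writing $\int_{\Omega} e\,g = \int_{\Omega} e(-\triangle\phi) = \int_{\Omega}\nabla e \cdot \nabla \phi$ and using that $\int_{\Omega}\nabla e\cdot\nabla v_{h} = 0$ for every $v_{h} \in V_{h}^k$, I may subtract an arbitrary $\phi_{h} \in V_{h}^k$ to obtain $\int_{\Omega} e\,g = \int_{\Omega}\nabla e \cdot \nabla(\phi - \phi_{h})$. A Hölder inequality with the conjugate exponents $p$ and $p'$ (note $p' \geqslant 2$ since $p<2$) then yields
\[
\left|\int_{\Omega} e\,g\right| \leqslant |e|_{1,p,\Omega}\,|\phi - \phi_{h}|_{1,p',\Omega}.
\]

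It remains to choose $\phi_{h}$ and bound $|\phi-\phi_{h}|_{1,p',\Omega}$. I take $\phi_{h} = \eta$ furnished by the approximation property $\widetilde{\rm {\bf B}}${\bf 1} of Assumption \ref{hypoGene}, applied elementwise with $s=1$ and $\ell = q+2$; this requires $q+2 \leqslant M = k+1$, i.e. $q \leqslant k-1$, which is precisely the stated range. On each element $T$ this gives $|\phi-\eta|_{1,p',T} \leqslant C\, h^{2(1/p'-1/2)}\, h^{q+1}\,|\phi|_{q+2,2,T}$. Summing the $p'$-th powers over all $T$ and using that $p' \geqslant 2$, so that $\ell^{p'}\hookrightarrow\ell^2$ gives $\sum_{T} |\phi|_{q+2,2,T}^{p'} \leqslant \big(\sum_{T} |\phi|_{q+2,2,T}^2\big)^{p'/2} = |\phi|_{q+2,2,\Omega}^{p'}$, I recover the global estimate $|\phi-\eta|_{1,p',\Omega} \leqslant C\, h^{2(1/p'-1/2)}\, h^{q+1}\,|\phi|_{q+2,\Omega}$. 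Combining this with the elliptic regularity bound and taking the supremum over $g$ produces \eqref{AubNitWeak}.

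I expect the main technical point to be the passage from the elementwise $\L^{p'}$ approximation bound to a global one: this is exactly where the factor $h^{2(1/p'-1/2)}$ is generated, and it relies both on the $\L^{p'}$-scaled form of Assumption \ref{hypoGene} and on the sequence embedding $\ell^{p'}\hookrightarrow\ell^2$ for $p'\geqslant 2$. A secondary difficulty is to justify rigorously the integration by parts and the Galerkin orthogonality for an error $e$ that is merely $W^{1,p}_{0}$; I would handle this by a density argument, approximating $e$ in $W^{1,p}_{0}(\Omega)$ and using that $\nabla\phi \in H^1(\Omega) \subset \L^{p'}(\Omega)$.
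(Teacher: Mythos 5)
Your proposal is correct and follows essentially the same route as the paper's own proof: the duality characterization of the negative norm, the $H^{q+2}$-regular dual problem, Galerkin orthogonality plus H\"older with the pair $(p,p')$, the elementwise application of Assumption $\widetilde{\rm {\bf B}}${\bf 1} with $s=1$, $\ell=q+2$, and the summation over elements via the embedding $\ell^{p'}\hookrightarrow\ell^{2}$ for $p'\geqslant 2$. The only differences are cosmetic (notation, and your explicit remarks on why $q\leqslant k-1$ is forced and on justifying the integration by parts by density), so there is nothing to correct.
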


\begin{proof}
We aim at estimating, for $q \geqslant 0$, the $H^{-q}$-norm of the error $e$:
\begin{equation}\label{defnormneg}
\| e \|_{-q,\Omega} = \sup_{\phi \in \mathscr{C}^{\infty}_{0}(\Omega)} \frac{| \int_{\Omega} e \phi |}{ \| \phi \|_{q,\Omega}}.
\end{equation}
The error $e \in W^{1,p}_{0}$ satisfies
\begin{equation*}
\int_{\Omega} \nabla e \cdot \nabla v_{h} = 0, \ \forall v_{h} \in V_{h}^k.
\end{equation*}
Let be $\phi \in \mathscr{C}^{\infty}_{0}(\Omega)$ and let $w^\phi \in H^{q+2}$ be the solution of
\begin{equation*}
\left\{ \begin{array}{rccl}
- \triangle w^\phi & = & \phi &\text{in }\Omega,\\
w^\phi & = & 0 &\text{on } \partial \Omega.
\end{array} \right.
\end{equation*}
In dimension 2, by the Sobolev injections established for instance in \cite{HBr}, $H^{q+2}(\Omega) \subset W^{1,p'}(\Omega)$ for all $p'$ in $[2, +\infty [$. Thus, for any $w_{h} \in V_{h}^k$,
\begin{equation*}
\left| \int_{\Omega} e \phi \right| = \left| \int_{\Omega} e \triangle w^\phi \right| = \left| \int_{\Omega} \nabla e \cdot \nabla w^\phi \right| = \left| \int_{\Omega} \nabla e \cdot \nabla(w^\phi - w_{h}) \right| \leqslant | w^\phi - w_{h} |_{1,p',\Omega} | e |_{1,p,\Omega}.
\end{equation*}
We have to estimate $| w^\phi - w_{h} |_{1,p',\Omega}$. It holds
\begin{equation*}
|w^\phi - w_{h}|_{1,p',\Omega}^{p'} = \sum_{T} |w^\phi - w_{h}|_{1,p',T}^{p'}.
\end{equation*}
For all $0 \leqslant q \leqslant k-1$ and for all element T in $\mathcal{T}_{h}$, thanks to Assumption $\widetilde{\rm {\bf B}}${\bf 1} applied for $s=1$, $\ell = q+2$, there exists $w_{h} \in V_{h}^k$ such as
\begin{equation}\label{ineCiarlet}
| w^\phi-w_{h}|_{1,p',T} \leqslant C h^{2(1/p' - 1/2)} h^{q+1} | w^\phi |_{q+2,2,T}.
\end{equation}
We number the triangles of the mesh $\{T_{i}, i=1, \cdots, N \}$ and we set
\begin{equation*}
a = (a_{i})_{i} \text{ and } b=(b_{i})_{i}, \text{ where } a_{i} = | w^\phi - w_{h} |_{1,p',T_{i}} \text{ and } b_{i} = | w^\phi |_{q+2,2,T_{i}}.
\end{equation*}
By \eqref{ineCiarlet}, we have, for all $i$ in $[\![1,N]\!]$,
\begin{equation*}
a_{i} \leqslant C h^{2(1/p'-1/2)} h^{q+1}b_{i}.
\end{equation*}
We recall the norm equivalence in $\R^N$ for $0 < r < s$,
\begin{equation*}
\| x \|_{\ell^s} \leqslant \| x \|_{\ell^r} \leqslant N^{1/r-1/s} \| x \|_{\ell^s}.
\end{equation*}
Remark that here $N \sim Ch^{-2}$. As $2 < p'$, we have $\| b \|_{\ell^{p'}} \leqslant \| b \|_{\ell^{2}}$. Then, we can write
\begin{align*}
| w^\phi - w_{h} \|_{1,p',\Omega} = \| a \|_{\ell^{p'}} & \leqslant h^{q+1} h^{2(1/p'-1/2)} \| b \|_{\ell^{p'}}\\
& \leqslant h^{q+1} h^{2(1/p'-1/2)} \| b \|_{\ell^{2}} \\
& \leqslant h^{q+1} h^{2(1/p'-1/2)} | w^\phi |_{q+2,2,\Omega}\\
& \leqslant h^{q+1} h^{2(1/p'-1/2)} \| \phi \|_{q,\Omega}.
\end{align*}
Finally, using this estimate in \eqref{defnormneg}, we obtain, for $q \leqslant k-1$,
\begin{equation*}
\| e \|_{-q,\Omega} \leqslant C h^{q+1} h^{2(1/p'-1/2)} | e |_{1,p,\Omega}.
\end{equation*}
\end{proof}

\begin{cor}
For finite elements of order $k$, for any $\varepsilon > 0$ small enough,
\begin{equation}\label{ineqinterm1}
\| u_{\delta} - u_{\delta}^h \|_{-k+1,\Omega} \leqslant C h^{k} h^{-\varepsilon} | u_{\delta} - u_{\delta}^h |_{1,p,\Omega}.
\end{equation}
\end{cor}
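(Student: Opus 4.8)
The corollary follows from Lemma \ref{ANweak} by specialising the generic exponent $q$ to the value $k-1$ and then optimising over the free parameter $p$ (equivalently $p'$). The plan is to apply \eqref{AubNitWeak} with $q = k-1$, which is admissible since the lemma covers all $0 \leqslant q \leqslant k-1$, yielding
\begin{equation*}
\| u_{\delta} - u_{\delta}^h \|_{-(k-1),\Omega} \leqslant C h^{k} h^{2(1/p'-1/2)} | u_{\delta} - u_{\delta}^h |_{1,p,\Omega}.
\end{equation*}
The only task remaining is to show that the extra factor $h^{2(1/p'-1/2)}$ can be absorbed into a harmless $h^{-\varepsilon}$ for a suitable choice of $p'$ depending on $h$, without the accompanying seminorm $| u_{\delta} - u_{\delta}^h |_{1,p,\Omega}$ blowing up too fast.

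First I would rewrite the exponent in terms of $p$ alone. Since $1/p + 1/p' = 1$, one has $1/p' - 1/2 = 1/2 - 1/p$, so the extra power of $h$ is $h^{2(1/2 - 1/p)} = h^{1 - 2/p}$. As $p \nearrow 2$, this exponent tends to $0$ from below, so it is natural to choose $p$ close to $2$ and quantify the loss. Concretely I would set $p = 2 - \eta$ for a small $\eta = \eta(h) > 0$ to be fixed, giving $1 - 2/p = 1 - 2/(2-\eta) = -\eta/(2-\eta)$, so the offending factor is $h^{-\eta/(2-\eta)}$, which is indeed of the form $h^{-\varepsilon}$ with $\varepsilon = \eta/(2-\eta) \to 0$ as $\eta \to 0$. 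Thus for any prescribed $\varepsilon > 0$ small enough one can pick a fixed $p < 2$ (independent of $h$) so that $h^{2(1/p'-1/2)} \leqslant C h^{-\varepsilon}$ for $h \leqslant 1$, and the claimed inequality \eqref{ineqinterm1} follows immediately with this fixed $p$.

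The one point that requires a little care — and which I expect to be the main obstacle — is that the seminorm $| u_{\delta} - u_{\delta}^h |_{1,p,\Omega}$ on the right-hand side depends on $p$, and by the regularity estimate \eqref{equivud2} the quantity $\| u_{\delta} \|_{1,p,\Omega}$ blows up like $(2-p)^{-1/2}$ as $p \nearrow 2$. If one let $p$ depend on $h$ one would have to control this blow-up against the gain in the power of $h$; but since the corollary only asserts the bound for a \emph{fixed} $\varepsilon > 0$, it suffices to fix $p < 2$ once and for all (small $\varepsilon$ corresponding to $p$ near $2$). For such a fixed $p$ the seminorm $| u_{\delta} - u_{\delta}^h |_{1,p,\Omega}$ is simply a finite $h$-dependent quantity that is carried along unchanged into \eqref{ineqinterm1}, so no blow-up enters at this stage — the regularity estimate \eqref{equivud2} will only be needed later, when this corollary is combined with a bound on $| u_{\delta} - u_{\delta}^h |_{1,p,\Omega}$ itself. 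Hence the proof reduces to the elementary choice of $p$ and the algebraic identity $2(1/p'-1/2) = 1 - 2/p$ described above.
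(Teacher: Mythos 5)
Your proposal is correct and follows essentially the same route as the paper: apply Lemma \ref{ANweak} with $q=k-1$ and choose $p$ close to $2$ so that the factor $h^{2(1/p'-1/2)}$ becomes $h^{-\varepsilon}$; indeed your parametrization $p=2-\eta$ with $\varepsilon=\eta/(2-\eta)$ is exactly the paper's choice $p=2/(1+\varepsilon)$, $p'=2/(1-\varepsilon)$ in \eqref{defp} written in different variables. Your closing observation — that the blow-up \eqref{equivud2} of $\|u_{\delta}\|_{1,p,\Omega}$ as $p\nearrow 2$ plays no role here and only enters when the seminorm $|u_{\delta}-u_{\delta}^h|_{1,p,\Omega}$ is estimated later — also matches the paper's organization.
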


\begin{proof}
We will apply Lemma \ref{ANweak} to estimate $\| u_{\delta} - u_{\delta}^h \|_{-q,\Omega}$. This can be done with a suitable choice for $(p,p')$. With $\varepsilon > 0$ small enough, let us take:
\begin{equation}\label{defp}
p = \frac{2}{1+\varepsilon} \text{ and } p' = \frac{2}{1-\varepsilon}.
\end{equation}
In inequality \eqref{AubNitWeak}:
\begin{equation}\label{calculexp}
2 \left( \frac{1}{p'} - \frac{1}{2} \right) = 2 \left( \frac{1-\varepsilon}{2} - \frac{1}{2} \right) = - \varepsilon.
\end{equation}
Finally, for finite elements of order $k$,
\begin{equation*}
\| u_{\delta} - u_{\delta}^h \|_{-k+1,\Omega} \leqslant C h^{k} h^{-\varepsilon} | u_{\delta} - u_{\delta}^h |_{1,p,\Omega}.
\end{equation*}
\end{proof}

\esp

\subsection{Estimate of $| u_{\delta}-u_{\delta}^h |_{1,p,\Omega}$.}
It remains to estimate the quantity $| u_{\delta}-u_{\delta}^h |_{1,p,\Omega}$ by bounding $| u_{\delta}^h |_{1,p,\Omega}$ in terms of $|u_{\delta}|_{1,p,\Omega}$ (equality \eqref{ineqinterm2}). To achieve this, we will need the following discrete inf-sup condition.

\begin{lem}\label{infsupdiscrete}
For $p$ and $p'$ defined in \eqref{defp}, we have the discrete inf-sup condition
\begin{equation*}
\inf_{u_{h} \in V_{h}^k} \sup_{v_{h} \in V_{h}^k} \frac{\int_{\Omega}\nabla u_{h} \cdot \nabla v_{h}}{\| u_{h} \|_{1,p,\Omega} \| v_{h} \|_{1,p',\Omega}} \geqslant C h^{\varepsilon}.
\end{equation*}
\end{lem}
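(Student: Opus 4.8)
The plan is to establish the inf-sup condition by the classical device of exhibiting, for each $u_{h} \in V_{h}^k$, an explicit competitor in the supremum; here the natural and in fact sufficient choice is $v_{h} = u_{h}$. With this choice the numerator becomes $\int_{\Omega} \nabla u_{h} \cdot \nabla u_{h} = | u_{h} |_{1,2,\Omega}^2$, so that
\[
\sup_{v_{h} \in V_{h}^k} \frac{\int_{\Omega} \nabla u_{h} \cdot \nabla v_{h}}{\| u_{h} \|_{1,p,\Omega} \| v_{h} \|_{1,p',\Omega}} \geqslant \frac{| u_{h} |_{1,2,\Omega}^2}{\| u_{h} \|_{1,p,\Omega}\, \| u_{h} \|_{1,p',\Omega}},
\]
and it then remains to bound the two factors in the denominator from above by the $H^1$-seminorm of $u_{h}$, up to the correct power of $h$.

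For the first factor, since $p < 2$, I would use Hölder's inequality on the bounded domain $\Omega$, which gives $\| u_{h} \|_{1,p,\Omega} \leqslant |\Omega|^{1/p - 1/2} \| u_{h} \|_{1,2,\Omega}$, the constant $|\Omega|^{1/p-1/2}$ staying bounded as $p \nearrow 2$. For the second factor, since $p' > 2$, I would invoke the inverse estimate of Assumption $\widetilde{\rm {\bf B}}${\bf 3} with $s = \ell = 1$, which on each element $T$ yields $\| u_{h} \|_{1,p',T} \leqslant C h^{d(1/p'-1/2)} \| u_{h} \|_{1,2,T}$. Summing the $p'$-th powers over the elements and using, exactly as in the proof of Lemma \ref{ANweak}, the embedding $\| \cdot \|_{\ell^{p'}} \leqslant \| \cdot \|_{\ell^2}$ valid for $p' \geqslant 2$, this globalises to $\| u_{h} \|_{1,p',\Omega} \leqslant C h^{d(1/p'-1/2)} \| u_{h} \|_{1,2,\Omega}$. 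With $d = 2$ and $1/p' = (1-\varepsilon)/2$ the exponent is $d(1/p'-1/2) = -\varepsilon$ (cf. \eqref{calculexp}), so this factor is exactly $h^{-\varepsilon}$. Finally, because $V_{h}^k \subset H_{0}^1(\Omega)$, the Poincaré inequality gives $\| u_{h} \|_{1,2,\Omega} \leqslant C_{P} | u_{h} |_{1,2,\Omega}$, which lets me replace the full $H^1$-norm appearing in the two denominators by the seminorm of the numerator.

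Combining these three estimates, the quotient is bounded below by
\[
\frac{| u_{h} |_{1,2,\Omega}^2}{C\, | u_{h} |_{1,2,\Omega} \cdot C h^{-\varepsilon} | u_{h} |_{1,2,\Omega}} = C' h^{\varepsilon},
\]
with $C'$ independent of $h$ (depending only on $\Omega$, on $C_{P}$, and on the fixed exponents $p, p'$), and taking the infimum over $u_{h}$ preserves this lower bound. I do not expect a genuine obstacle in this argument: the only points requiring care are to sum the element-wise inverse estimate in the correct direction — using $\ell^{p'} \hookrightarrow \ell^2$ rather than the reverse, which is precisely where $p' \geqslant 2$ enters — and to check that every constant (Hölder, inverse, Poincaré) is independent of $h$, the whole $h$-dependence being concentrated in the single factor $h^{d(1/p'-1/2)} = h^{-\varepsilon}$ furnished by the inverse estimate.
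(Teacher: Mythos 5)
Your proof is correct, but it follows a genuinely different route from the paper's. The paper proves the lemma by transferring the \emph{continuous} inf-sup condition on $W^{1,p}_{0}(\Omega)\times W^{1,p'}_{0}(\Omega)$ (a duality result quoted from the literature) to the discrete pair: for $v\in W^{1,p'}_{0}(\Omega)$ it introduces the $H^1_{0}$-Galerkin projection $\Pi_{h}v$, uses Galerkin orthogonality to replace $\int_{\Omega}\nabla u_{h}\cdot\nabla v$ by $\int_{\Omega}\nabla u_{h}\cdot\nabla \Pi_{h}v$, and then pays the factor $h^{-\varepsilon}$ by applying the inverse estimate $\widetilde{\rm {\bf B}}${\bf 3} to $\Pi_{h}v$ together with the $H^1$-stability of $\Pi_{h}$. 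You instead exploit the symmetry and $H^1_{0}$-coercivity of the Laplacian: taking $v_{h}=u_{h}$ makes the numerator $|u_{h}|_{1,2,\Omega}^2$, and the denominator is controlled by H\"older ($p<2$), by $\widetilde{\rm {\bf B}}${\bf 3} applied to $u_{h}$ itself and globalised through $\|\cdot\|_{\ell^{p'}}\leqslant\|\cdot\|_{\ell^{2}}$, and by Poincar\'e; the loss $h^{-\varepsilon}=h^{d(1/p'-1/2)}$ is identical in both arguments, and in both cases one needs the constant in $\widetilde{\rm {\bf B}}${\bf 3} to be uniform in $p'$ (essential later, since $\varepsilon=|\ln h|^{-1}$ varies with $h$). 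What your version buys is economy and self-containedness: no appeal to the continuous inf-sup condition, no Galerkin projection, no stability argument. What the paper's version buys is generality: the projection mechanism is the standard way to inherit a discrete inf-sup from a continuous one, and it would survive in settings (nonsymmetric or indefinite operators, genuinely distinct trial/test spaces) where the choice $v_{h}=u_{h}$ gives no lower bound on the numerator. For the symmetric, coercive problem at hand, your shortcut is perfectly legitimate.
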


\begin{proof}
For $\varepsilon$ sufficiently small and $p$ and $p'$ defined in \eqref{defp}, the continuous inf-sup condition
\begin{equation*}
\inf_{u \in W^{1,p}_{0}} \sup_{v \in W^{1,p'}_{0}} \frac{\int_{\Omega} \nabla u \cdot \nabla v}{\| u \|_{1,p} \| v \|_{1,p'}} \geqslant \beta > 0
\end{equation*}
holds for $\beta$ independant of $p$ and $p'$. It is a consequence of the duality of the two spaces $W^{1,p}_{0}(\Omega)$ and $W^{1,p'}_{0}(\Omega)$, see \cite{JeKe}. For $v \in W^{1,p'}_{0}(\Omega)$, let $\Pi_{h}v$ denote the $H^1_{0}$-Galerkin projection of $v$ onto $V_{h}^k$. This is well defined since $W^{1,p'}(\Omega) \subset H^1_{0}(\Omega)$. We apply Assumption $\widetilde{\rm {\bf B}}${\bf 3} to $\Pi_{h}v$ for $\ell = s = 1$, and get
\begin{equation*}
\| \Pi_{h}v \|_{1, p',\Omega} \leqslant C h^{-2(1/2 - 1/p')} \| \Pi_{h}v \|_{1,2,\Omega} \leqslant C h^{-2(1/2 - 1/p')} \| v \|_{1,2,\Omega} \leqslant C h^{-2(1/2 - 1/p')} \| v \|_{1,p',\Omega}.
\end{equation*}
Moreover, for any $u_{h} \in V_{h}^k \subset W^{1,p}(\Omega)$,
\begin{align*}
\| u_{h} \|_{1,p,\Omega} & \leqslant C \sup_{v \in W^{1,p'}_{0}} \frac{\int_{\Omega} \nabla u_{h} \cdot \nabla v}{\| v \|_{1,p',\Omega}} = C \sup_{v \in W^{1,p'}_{0}} \frac{\int_{\Omega} \nabla u_{h} \cdot \nabla \Pi_{h}v}{\| v \|_{1,p',\Omega}} \\
& \leqslant C h^{-2(1/2 - 1/p')} \sup_{v \in W^{1,p'}_{0}} \frac{\int_{\Omega} \nabla u_{h} \cdot \nabla \Pi_{h}v}{\| \Pi_{h}v \|_{1,p',\Omega}} \\
& \leqslant C h^{-2(1/2 - 1/p')} \sup_{v_{h} \in V_{h}^k} \frac{\int_{\Omega} \nabla u_{h} \cdot \nabla v_{h}}{\| v_{h} \|_{1,p',\Omega}}.
\end{align*}
Finally, thanks to Poincar\'e inequality, and to inequality \eqref{calculexp},
\begin{equation*}
\inf_{u_{h} \in V_{h}^k} \sup_{v_{h} \in V_{h}^k} \frac{\int_{\Omega}\nabla u_{h} \cdot \nabla v_{h}}{\| u_{h} \|_{1,p,\Omega} \| v_{h} \|_{1,p',\Omega}} \geqslant C h^{\varepsilon}.
\end{equation*}
\end{proof}

Then, we can estimate $| u_{\delta} - u_{\delta}^h |_{1,p,\Omega}$ :
\begin{lem}
With $p$ and $p'$ defined in \eqref{defp},
\begin{equation}\label{ineqinterm3}
| u_{\delta} - u_{\delta}^h |_{1,p,\Omega} \leqslant C\frac{h^{-\varepsilon}}{\sqrt{\varepsilon}}.
\end{equation}
\end{lem}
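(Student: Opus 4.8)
The plan is to bound the error seminorm by the triangle inequality, $| u_{\delta} - u_{\delta}^h |_{1,p,\Omega} \leqslant | u_{\delta} |_{1,p,\Omega} + | u_{\delta}^h |_{1,p,\Omega}$, and to treat the two terms separately. The first term is already controlled by the regularity analysis of Subsection~\ref{PartReg}: the equivalence \eqref{equivud2} gives $| u_{\delta} |_{1,p,\Omega} \leqslant \| u_{\delta} \|_{1,p,\Omega} \leqslant C(2-p)^{-1/2}$, and since $p = 2/(1+\varepsilon)$ yields $2-p = 2\varepsilon/(1+\varepsilon) \sim 2\varepsilon$, this term is already of the desired size $O(\varepsilon^{-1/2})$. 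All the real work therefore concerns the discrete term $| u_{\delta}^h |_{1,p,\Omega}$.

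For that term I would invoke the discrete inf-sup condition of Lemma~\ref{infsupdiscrete} with $u_h = u_{\delta}^h$, together with the equivalence of $|\cdot|_{1,p,\Omega}$ and $\|\cdot\|_{1,p,\Omega}$ on $W^{1,p}_0(\Omega)$ (Poincar\'e, with constant uniform for $p$ near $2$). This reduces the estimate to
\begin{equation*}
\| u_{\delta}^h \|_{1,p,\Omega} \leqslant C h^{-\varepsilon} \sup_{v_h \in V_h^k} \frac{\int_{\Omega} \nabla u_{\delta}^h \cdot \nabla v_h}{\| v_h \|_{1,p',\Omega}}.
\end{equation*}
The numerator is exactly the right-hand side of the Galerkin equation \eqref{Galerkinproj}, namely $\int_{\Omega} \nabla u_{\delta}^h \cdot \nabla v_h = v_h(x_0)$, so it remains to bound the point evaluation $v_h(x_0)$ by $\| v_h \|_{1,p',\Omega}$ with a constant growing no faster than $\varepsilon^{-1/2}$ as $\varepsilon \to 0$.

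The crux is to observe that this point-evaluation bound is governed by the \emph{same} quantity $| u_{\delta} |_{1,p,\Omega}$, through the weak formulation of $(P_\delta)$ used as a bridge. Since $p' > 2$ in dimension two, every $v_h \in V_h^k \subset W^{1,p'}_0(\Omega)$ is continuous and admissible as a test function, so $v_h(x_0) = \int_{\Omega} \nabla u_{\delta} \cdot \nabla v_h$; H\"older's inequality then gives $| v_h(x_0) | \leqslant | u_{\delta} |_{1,p,\Omega}\, | v_h |_{1,p',\Omega} \leqslant | u_{\delta} |_{1,p,\Omega}\, \| v_h \|_{1,p',\Omega}$. Combining this with the inf-sup bound yields $\| u_{\delta}^h \|_{1,p,\Omega} \leqslant C h^{-\varepsilon}\, | u_{\delta} |_{1,p,\Omega} \leqslant C h^{-\varepsilon} \varepsilon^{-1/2}$, and the triangle inequality closes the argument (the first term being absorbed since $h^{-\varepsilon} \geqslant 1$ for $h \leqslant 1$). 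I expect the main difficulty to be purely one of bookkeeping: keeping track of the two distinct sources of blow-up --- the factor $h^{-\varepsilon}$ coming from the discrete inf-sup constant and the factor $\varepsilon^{-1/2}$ coming from the singular regularity of $u_{\delta}$ via \eqref{equivud2} --- while verifying that every intervening constant (Poincar\'e, and the continuous embedding $W^{1,p'}_0(\Omega) \hookrightarrow \mathscr{C}^0(\overline{\Omega})$ implicit in the test-function step) stays uniform in $p$ and $p'$ as $\varepsilon \to 0$, so that no hidden $\varepsilon$-dependence spoils the final rate.
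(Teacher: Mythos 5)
Your proposal is correct and follows essentially the same route as the paper: triangle inequality, the discrete inf-sup condition of Lemma~\ref{infsupdiscrete} applied to $u_{\delta}^h$, Galerkin consistency to replace $\int_{\Omega}\nabla u_{\delta}^h\cdot\nabla v_h$ by $\int_{\Omega}\nabla u_{\delta}\cdot\nabla v_h$ (both equal $v_h(x_0)$), H\"older, and finally the blow-up estimate \eqref{equivud2} with $2-p\sim\varepsilon$ and $h^{-2(1/2-1/p')}=h^{-\varepsilon}$ from \eqref{calculexp}. The only cosmetic difference is that you pass explicitly through the point evaluation $v_h(x_0)$, whereas the paper writes the Galerkin identity directly; your attention to the uniformity of the inf-sup and Poincar\'e constants in $p,p'$ matches the paper's claim that $\beta$ is independent of $p$ and $p'$.
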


\begin{proof}
According to Lemma \ref{infsupdiscrete}, it exists $v_{h} \in V_{h}^k$, with $\| v_{h} \|_{1,p',\Omega} = 1$, such that
\begin{equation*}
h^{2(1/2 - 1/p')} \| u_{\delta}^h \|_{1,p,\Omega} \leqslant C \int_{\Omega} \nabla u_{\delta}^h \cdot \nabla v_{h} = C \int_{\Omega} \nabla u_{\delta} \cdot \nabla v_{h} \leqslant C \| u_{\delta} \|_{1,p,\Omega}.
\end{equation*}
So we have
\begin{equation}\label{ineqinterm2}
| u_{\delta} - u_{\delta}^h |_{1,p,\Omega} \leqslant | u_{\delta} |_{1,p,\Omega} + | u_{\delta}^h |_{1,p,\Omega} \leqslant C h^{-2(1/2 - 1/p')} \| u_{\delta} \|_{1,p,\Omega}.
\end{equation}
All that remains is to substitute $\| u_{\delta}\|_{1,p,\Omega}$ for the expression established in \eqref{equivud2}. For $p$ defined as in \eqref{defp},
\begin{equation*}
\| u_{\delta} \|_{1,p,\Omega} \leqslant \frac{C}{\sqrt{2-p}} \leqslant \frac{C}{\sqrt{\varepsilon}}.
\end{equation*}
Finally, with \eqref{calculexp} and \eqref{ineqinterm2}, we get
\begin{equation*}
| u_{\delta} - u_{\delta}^h |_{1,p,\Omega} \leqslant C\frac{h^{-\varepsilon}}{\sqrt{\varepsilon}}.
\end{equation*}
\end{proof}

\esp

\subsection{Proof of Theorem \ref{THM}.}
We can now prove Theorem \ref{THM}.

\begin{proof}
The function $u_{\delta}$ is analytic on $\overline{\Omega}_{1}$, therefore the quantity $\| u_{\delta} \|_{k+1,\Omega_{1}}$ is bounded. If we suppose $s = 1$, Nitsche and Schatz Theorem gives, for $\ell = k+1$ and $q = k-1$,
\begin{equation*}
\| u_{\delta}-u_{\delta}^h \|_{1,\Omega_{0}} \leqslant C \left( h^{k} + \|u_{\delta} - u_{\delta}^h \|_{-k+1,\Omega_{1}} \right).
\end{equation*}
Thanks to \eqref{ineqinterm1} and \eqref{ineqinterm3},
\begin{equation*}
\| u_{\delta} - u_{\delta}^h \|_{-k+1,\Omega} \leqslant C h^k \frac{h^{-2\varepsilon}}{\sqrt{\varepsilon}},
\end{equation*}
therefore, taking $\varepsilon = | \ln h |^{-1}$,
\begin{equation}\label{ineqinterm5-2}
\| u_{\delta} - u_{\delta}^h \|_{-k+1,\Omega} \leqslant C h^k \sqrt{| \ln h |}.
\end{equation}
Finally, we get the result of Theorem \ref{THM} for $s=1$ (inequality \eqref{INEQTHM}):
\begin{equation*}
\| u_{\delta} - u_{\delta}^h \|_{1,\Omega_{0}} \leqslant C h^k \sqrt{| \ln h |}.
\end{equation*}

Now, let us fix $2 \leqslant s \leqslant k$, Nitsche and Schatz Theorem gives, for $\ell = k+1$ and $q = k-1$,
\begin{equation*}
\| u_{\delta}-u_{\delta}^h \|_{s,\Omega_{0}} \leqslant C \left( h^{k+1-s} + h^{1-s} \| u_{\delta} - u_{\delta}^h \|_{-k+1,\Omega_{1}} \right).
\end{equation*}
So, thanks to \eqref{ineqinterm5-2}, we get the second result of Theorem \ref{THM} (inequality \eqref{INEQTHMk}),
\begin{equation*}
\| u_{\delta} - u_{\delta}^h \|_{s,\Omega_{0}} \leqslant C h^{k+1-s} \sqrt{| \ln h |}.
\end{equation*}
which ends the proof of Theorem \ref{THM}.
\end{proof}

\esp

\section{Proof of Theorem \ref{THM1}.}\label{ParticularCase}
To prove this theorem, we first regularize the right-hand side and prove that in our case the solution $u_{\delta}$ of $(P_{\delta})$ and the solution of the regularized problem are the same on the complementary of a neighbourhood of the singularity (Theorem \ref{bal}). The proof of Theorem \ref{THM1} is based once again on the Nitsche and Schatz Theorem and the observation that the discrete right-hand sides of problem $(P_{\delta})$ and the regularized problem are exactely the same, so that the numerical solutions are the same too (Lemma \ref{sameRHS}).

\subsection{Direct problem and regularized problem.}
Let $\varepsilon >0$, and $f_{\varepsilon}$ be defined on $\Omega$ by
\begin{equation}\label{deffe}
f_{\varepsilon} = \dis{\frac{d}{\sigma(\mathbb{S}_{d-1}) \varepsilon^{d}}\mathbbm{1}_{B_{\varepsilon}}},
\end{equation}
where $B_{\varepsilon} = B(x_{0},\varepsilon)$ and $\sigma(\mathbb{S}_{d-1})$ is the Lebesgue measure of the unit sphere in dimension $d$. The parameter $\varepsilon$ is supposed to be small enough so that $\overline{B_{\varepsilon}} \subset \subset \Omega$. The function $f_{\varepsilon}$ is a regularisation of de the Dirac distribution $\delta_{x_{0}}$. Let us consider the following problem:
\begin{equation*}
(P_{\varepsilon}) \left\{ \begin{array}{rccl} -\triangle u_{\varepsilon} & = & f_{\varepsilon} & \text{in } \Omega, \\ u_{\varepsilon} & = & 0 & \text{on } \partial \Omega. \end{array} \right.
\end{equation*}
Since $f_{\varepsilon} \in \L^2(\Omega)$, it is possible to show that problem $(P_{\varepsilon})$ has a unique variational solution $u_{\varepsilon}$ in $H^1_{0}(\Omega) \medcap H^2(\Omega)$ \cite{PGr}. We will show the following result:
\begin{thm}\label{bal}
The solution $u_{\delta}$ of $(P_{\delta})$  and the solution $u_{\varepsilon}$ of $(P_{\varepsilon})$  coincide on the closed $\widetilde{\Omega} = \overline{\Omega} \setminus B_{\varepsilon}$, ie,
\begin{equation*}
{u_{\delta}}_{\dis{|_{\widetilde{\Omega}}}} = {u_{\varepsilon}}_{\dis{|_{\widetilde{\Omega}}}}.
\end{equation*}
\end{thm}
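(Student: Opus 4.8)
The plan is to mirror, for $u_\varepsilon$, the decomposition $u_\delta = G(\cdot - x_0) + \omega$ recalled in Section~\ref{PartReg}. First I would introduce the free-space potential of the regularised source,
\[
N_\varepsilon(x) = \int_\Omega G(x - y)\, f_\varepsilon(y)\,\tn{d}y ,
\]
which satisfies $-\triangle N_\varepsilon = f_\varepsilon$ in $\Omega$ in the distributional sense, since $f_\varepsilon \in \L^2(\Omega)$ is bounded with compact support in $B_\varepsilon$. Setting $\omega_\varepsilon = u_\varepsilon - N_\varepsilon$, the function $\omega_\varepsilon$ is then harmonic in $\Omega$ and takes the boundary values $\omega_\varepsilon = -N_\varepsilon$ on $\partial\Omega$. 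The whole statement then reduces to comparing $N_\varepsilon$ with $G(\cdot - x_0)$ away from the ball.

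The key step, and the main obstacle, is the two-dimensional shell theorem: for every $x$ with $|x - x_0| \geqslant \varepsilon$ one has $N_\varepsilon(x) = G(x - x_0)$. To prove it, I would translate so that $x_0 = 0$, in which case $f_\varepsilon = (\pi \varepsilon^2)^{-1}\,\mathbbm{1}_{B_\varepsilon}$ is radial with unit mass (here $\sigma(\mathbb{S}_1) = 2\pi$). For fixed $x$ and any $0 < r < |x|$, the function $y \mapsto \log|x - y|$ is harmonic on the disk $\{|y| < |x|\}$, so the mean value property gives
\[
\frac{1}{2\pi r}\int_{|y| = r} \log|x - y|\,\tn{d}s(y) = \log|x| .
\]
Passing to polar coordinates and using this circle-mean identity yields $\int_{B_\varepsilon}\log|x - y|\,\tn{d}y = \int_0^\varepsilon 2\pi r \log|x|\,\tn{d}r = \pi \varepsilon^2 \log|x|$, whence $N_\varepsilon(x) = -\tfrac{1}{2\pi}\log|x| = G(x)$ for $|x| > \varepsilon$. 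The borderline case $|x| = \varepsilon$ is unproblematic, since the circles of radius $r < \varepsilon$ still avoid the singularity at $y = x$ and $\log$ is locally integrable, so the identity persists for all $|x| \geqslant \varepsilon$ (it also follows by continuity of both sides).

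Finally, because $\overline{B_\varepsilon} \subset\subset \Omega$, the boundary $\partial\Omega$ lies entirely in the region $\{|x - x_0| \geqslant \varepsilon\}$, so the shell theorem gives $N_\varepsilon = G(\cdot - x_0)$ on $\partial\Omega$ and therefore $\omega_\varepsilon = -G(\cdot - x_0)$ there. Thus $\omega_\varepsilon$ and $\omega$ are both harmonic in $\Omega$ with the same Dirichlet data, and by uniqueness of the Dirichlet problem \eqref{poisson} we get $\omega_\varepsilon = \omega$ throughout $\Omega$. Combining this with the shell theorem on $\widetilde\Omega = \overline\Omega \setminus B_\varepsilon$, I conclude
\[
u_\varepsilon = N_\varepsilon + \omega_\varepsilon = G(\cdot - x_0) + \omega = u_\delta \quad \text{on } \widetilde\Omega ,
\]
which is the claim. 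I expect the only delicate point to be the shell theorem itself, namely justifying the circle-mean identity up to the boundary of $B_\varepsilon$ and the interchange of radial integration; the identification of the harmonic corrector is then a routine uniqueness argument, and the same reasoning works verbatim in dimension $d$ with the corresponding fundamental solution.
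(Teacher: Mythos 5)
Your proof is correct and takes essentially the same route as the paper: the paper likewise builds the particular solution $f_\varepsilon * G$ (your $N_\varepsilon$), proves your ``shell theorem'' as its Lemma \ref{ConvHarm} via the mean value property (averaging the harmonic function over circles centred at $x$ rather than at $x_0$, a purely cosmetic difference), and concludes with the same harmonic corrector $\omega$ and uniqueness argument, extending to $\partial B_\varepsilon$ by continuity. The only presentational difference is that you decompose $u_\varepsilon = N_\varepsilon + \omega_\varepsilon$ and identify $\omega_\varepsilon = \omega$, whereas the paper assembles $f_\varepsilon * G + \omega$ and invokes uniqueness of the solution of $(P_\varepsilon)$.
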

The proof is based on the following lemma.

\begin{lem}\label{ConvHarm}
Let $d \in \N^{*}$, $\varepsilon >0$, $x \in \R^d$, $v$ a function defined on $\R^d$, harmonic on $\overline{B}(x,\varepsilon)$, and $f \in L^1(\R^d)$ such that
\begin{itemize}
\item $f$ is radial and positive,
\item $supp(f) \subset B(0,\varepsilon)$, $\varepsilon > 0$,
\item $\dis{\int_{\R^d}} f(x) \, {\tn dx} = 1$.
\end{itemize}
Then, $f * v(x) = \dis{\int_{\R^d}} f(y)v(x-y) \, \tn{d}y = v(x)$.
\end{lem}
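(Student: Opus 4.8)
The plan is to exploit the mean value property of harmonic functions. The statement says that if $f$ is a radial, positive, $L^1$ function supported in $B(0,\varepsilon)$ with total mass $1$, and $v$ is harmonic on $\overline{B}(x,\varepsilon)$, then $f*v(x)=v(x)$. First I would write $f$ in terms of its radial profile: since $f$ is radial, there is a function $g:[0,\infty)\to\R$ such that $f(y)=g(|y|)$, and the support condition forces $g(r)=0$ for $r\geqslant\varepsilon$. Then I would pass to polar (spherical) coordinates in the convolution integral, writing $y=r\omega$ with $r=|y|\in[0,\varepsilon)$ and $\omega\in\mathbb{S}_{d-1}$, so that $\tn{d}y = r^{d-1}\,\tn{d}r\,\tn{d}\sigma(\omega)$.

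The key step is then to separate the radial and angular parts:
\begin{equation*}
f*v(x) = \int_{\R^d} g(|y|)\,v(x-y)\,\tn{d}y = \int_{0}^{\varepsilon} g(r)\,r^{d-1}\left( \int_{\mathbb{S}_{d-1}} v(x-r\omega)\,\tn{d}\sigma(\omega) \right) \tn{d}r.
\end{equation*}
For each fixed $r<\varepsilon$, the inner integral is the average of $v$ over the sphere of radius $r$ centred at $x$ (up to the surface-measure normalisation). Since $v$ is harmonic on $\overline{B}(x,\varepsilon)$, every such sphere of radius $r\leqslant\varepsilon$ lies in the region of harmonicity, so the mean value property gives
\begin{equation*}
\int_{\mathbb{S}_{d-1}} v(x-r\omega)\,\tn{d}\sigma(\omega) = \sigma(\mathbb{S}_{d-1})\,v(x).
\end{equation*}

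Substituting this back, the value $v(x)$ factors out of the $r$-integral, and what remains is
\begin{equation*}
f*v(x) = \sigma(\mathbb{S}_{d-1})\,v(x)\int_{0}^{\varepsilon} g(r)\,r^{d-1}\,\tn{d}r = v(x)\int_{\R^d} f(y)\,\tn{d}y = v(x),
\end{equation*}
where the last equality uses the unit-mass hypothesis $\int_{\R^d} f = 1$ (recovered by undoing the polar-coordinate change). This completes the argument. The only genuine subtlety to watch is that the mean value property must be invoked on the closed ball $\overline{B}(x,\varepsilon)$, not just the open one, so that the boundary sphere $r=\varepsilon$ is covered; this is exactly why the hypothesis asks for harmonicity on $\overline{B}(x,\varepsilon)$ rather than on the open ball, and it is the reason the support condition is written with the open ball $B(0,\varepsilon)$ (so the mass concentrates on radii strictly less than $\varepsilon$, though the closed-ball hypothesis makes even the endpoint harmless). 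No real obstacle arises here; the proof is essentially a bookkeeping exercise around the mean value theorem, and the main care is simply in the normalisation constants and in confirming that every sphere used stays inside the region where $v$ is harmonic.
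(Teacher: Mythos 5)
Your proof is correct and follows essentially the same route as the paper's: decompose the convolution in spherical coordinates, apply the mean value property of $v$ on each sphere $\partial B(x,r)$ for $r \leqslant \varepsilon$, and then factor out $v(x)$ using the unit-mass hypothesis on $f$. The only cosmetic difference is that you introduce the radial profile $g$ explicitly where the paper writes $f(r)$ by abuse of notation; the mathematical content is identical.
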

\begin{proof}
As $supp(f)\subset B(0,\varepsilon)$, using spherical coordinates, we have:
\begin{equation*}
f * v (x) = \int_{0}^{\varepsilon} \int_{\S^{d-1}} f(r)v(x-r\omega) r^{d-1} \, {\tn d}\omega \, {\tn dr} = \int_{0}^{\varepsilon} r^{d-1} f(r) \left( \int_{\S^{d-1}} v(x-r\omega) {\tn d}\omega \right) {\tn dr}.
\end{equation*}
Besides, $v$ is harmonic on $\overline{B}(x,\varepsilon)$, so that the mean value property gives, for $0 < r \leqslant \varepsilon$,
\begin{equation*}
v(x) = \frac{1}{\sigma(\partial B(x,r))} \int_{\partial B(x,r)} v(y) \, {\tn d}y = \frac{r^{d-1}}{\sigma(\partial B(x,r))} \int_{\S^{d-1}} v(x-r\omega) \, {\tn d}\omega,
\end{equation*}
thus
\begin{equation*}
f * v (x) = \int_{0}^{\varepsilon} f(r) v(x) \sigma (\partial B (x,r) ) \, \tn{d}r = v(x) \int_{0}^{\varepsilon} \int_{\S^{d-1}} f(r) r^{d-1} \, {\tn d}\omega \, {\tn d}r = v(x) \int_{B(0,\varepsilon)} f(y) \, {\tn d}y = v(x).
\end{equation*}
\end{proof}

Now, let us prove Theorem \ref{bal}.
\begin{proof}
First, let us leave out boundary conditions and consider the following problem
\begin{equation}\label{distprob}
- \triangle u = f_{\varepsilon} \text{ in } \mathscr{D}'(\R^d).
\end{equation}
As $-\triangle G = \delta_{0}$ in $\mathscr{D}'(\R^d)$, we can build a function $u$ satisfying \eqref{distprob} as:
\begin{equation*}
u(x)=f_{\varepsilon} * G (x) = \int_{\R^d} f_{\varepsilon}(y) G(x-y) \, {\tn d}y = \int_{\R^d} f_{\varepsilon}(x_{0}+y)G(x-x_{0}-y) \, \tn{d}y = \Big(f_{\varepsilon}(x_{0} + \cdot) * G \Big) (x-x_{0}).
\end{equation*}
Moreover, for all $x\in \Omega \setminus \overline{B_{\varepsilon}}$, $G$ is harmonic on $\overline{B}(x-x_{0},\varepsilon)$, and $f_{\varepsilon}(\cdot + x_{0})$ satisfies the assumptions of Lemma \ref{ConvHarm}, so that $u(x) = \Big( f_{\varepsilon}(x_{0} + \cdot) * G \Big) (x-x_{0}) = G(x-x_{0})$. We conclude that $u$ and $G(\cdot - x_{0})$ have the same trace on $\partial \Omega$, and so $u+\omega$, where $\omega$ is the solution of the Poisson problem \eqref{poisson}, is a solution of the problem $(P_{\varepsilon})$. By the uniqueness of the solution, we have $u_{\varepsilon} = u + \omega$. Finally, for all $x \in \Omega \setminus \overline{B_{\varepsilon}}$, $u_{\varepsilon}(x) = u_{\delta}(x)$. These functions are continuous on $\widetilde{\Omega} = \Omega \setminus B_{\varepsilon}$, this equality is true on the closure of $\Omega$, which ends the proof of Theorem \ref{bal}.
\end{proof}

\begin{rem}
Theorem \ref{bal} holds for any radial positive function $f \in \L^1(\R^d) \medcap \L^2(\R^d)$ such that
\begin{equation*}
supp(f) \subset B(0,\varepsilon) \text{ and } \int_{\R^d} f(x) \, {\tn dx} = 1,
\end{equation*}
taking $f_{\varepsilon} = f(\cdot - x_{0})$. It is a direct consequence of Lemma \ref{ConvHarm}.
\end{rem}

\begin{rem}
The result is true in dimension 1, taking $\dis{f_{\varepsilon} = \frac{1}{2\varepsilon}\mathbbm{1}_{I_{\varepsilon}}}$, where $I_{\varepsilon} = [x_{0}-\varepsilon,x_{0}+\varepsilon] \subset ]a,b[ = I$. In this case, we can easily write down the solutions $u_{\delta}$ and $u_{\varepsilon}$ explicitly,
\begin{align*}
u_{\delta}(x) & = \left\{ 

\caption{Assumption on $B_{\varepsilon}$.}
\label{cond1}
\end{figure}

\begin{rem}
The parameter $\varepsilon$ will be chosen to be $h/10$, so it remains to fix a ``good'' triangle $T_{0}$ and to build the mesh accordingly, so that Assumption \ref{hypBeTo} is satisfied. Remark that it is always possible to locally modify any given mesh so that it satisfies this assumption.
\end{rem}

\begin{lem}\label{sameRHS}
Under Assumption \ref{hypBeTo},
$$u_{\varepsilon}^h = u_{\delta}^h,$$
where $u_{\delta}^h$ and $u_{\varepsilon}^h$ are respectively the numerical solutions of problems $(P_{\delta})$ and $(P_{\varepsilon})$.
\end{lem}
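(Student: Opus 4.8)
The plan is to reduce the claimed equality of the two discrete solutions to an equality of the two discrete right-hand sides, and then to evaluate the regularized right-hand side by means of Lemma~\ref{ConvHarm}. First I would write the two Galerkin problems side by side. By \eqref{Galerkinproj}, $u_{\delta}^h \in V_{h}^k$ satisfies $\int_{\Omega} \nabla u_{\delta}^h \cdot \nabla v_{h} = v_{h}(x_{0})$ for all $v_{h} \in V_{h}^k$, while, since $f_{\varepsilon} \in \L^2(\Omega)$, the discrete solution $u_{\varepsilon}^h \in V_{h}^k$ of $(P_{\varepsilon})$ satisfies $\int_{\Omega} \nabla u_{\varepsilon}^h \cdot \nabla v_{h} = \int_{\Omega} f_{\varepsilon} v_{h}$ for all $v_{h} \in V_{h}^k$. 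Subtracting, the difference $w_{h} = u_{\delta}^h - u_{\varepsilon}^h \in V_{h}^k$ obeys
\begin{equation*}
\int_{\Omega} \nabla w_{h} \cdot \nabla v_{h} = v_{h}(x_{0}) - \int_{\Omega} f_{\varepsilon} v_{h}, \qquad \forall v_{h} \in V_{h}^k.
\end{equation*}
Hence it suffices to prove the single scalar identity $\int_{\Omega} f_{\varepsilon} v_{h} = v_{h}(x_{0})$ for all $v_{h} \in V_{h}^k$; once this is established, the right-hand side above vanishes identically, and choosing $v_{h} = w_{h}$ together with the coercivity of the stiffness form on $V_{h}^k \subset H_{0}^1(\Omega)$ (Poincar\'e) forces $w_{h} = 0$, i.e. $u_{\delta}^h = u_{\varepsilon}^h$.

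It remains to prove the identity, and this is where Assumption~\ref{hypBeTo} enters. Since $B_{\varepsilon} \subset T_{0}$ and we are in the $P_{1}$ setting, the restriction of $v_{h}$ to $T_{0}$ is an affine function; let $v$ denote its affine extension to $\R^d$, so that $v$ is harmonic and $v = v_{h}$ on $B_{\varepsilon}$. Writing $f_{\varepsilon} = g(\cdot - x_{0})$ with $g = \frac{d}{\sigma(\mathbb{S}_{d-1}) \varepsilon^{d}}\mathbbm{1}_{B(0,\varepsilon)}$, the function $g$ is radial, nonnegative, supported in $B(0,\varepsilon)$, and satisfies $\int_{\R^d} g = 1$ (this is exactly the normalization built into \eqref{deffe}). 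A change of variable, using that the support of $g$ confines the integration to $B_{\varepsilon} \subset T_{0}$ where $v_{h} = v$, and that $g$ is radial, turns the integral into a convolution evaluated at $x_{0}$:
\begin{equation*}
\int_{\Omega} f_{\varepsilon} v_{h} = \int_{\R^d} g(y)\, v(x_{0} - y)\, \tn{d}y = (g * v)(x_{0}).
\end{equation*}
Applying Lemma~\ref{ConvHarm} to this $g$ and to the harmonic function $v$ then gives $(g*v)(x_{0}) = v(x_{0}) = v_{h}(x_{0})$, which is the desired identity.

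The only genuinely delicate point is the harmonicity of $v_{h}$ on $B_{\varepsilon}$: this is precisely what restricts the argument to the $P_{1}$ case, since affine functions are harmonic whereas higher-degree pieces are not, and it is Assumption~\ref{hypBeTo} that guarantees $v_{h}$ is a \emph{single} affine piece on all of $B_{\varepsilon}$, so that the mean-value mechanism of Lemma~\ref{ConvHarm} applies without interference from the inter-element discontinuities of $\nabla v_{h}$. Everything else is a routine reduction, and no estimate is needed: the two discrete loads agree \emph{exactly}, whence the two discrete solutions coincide.
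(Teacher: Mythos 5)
Your proof is correct and follows essentially the same route as the paper: both reduce the claim to the identity $\int_{\Omega} f_{\varepsilon} v_{h} = v_{h}(x_{0})$ for discrete test functions, and both obtain it from the fact that $P_{1}$ functions are affine, hence harmonic, on $T_{0} \supset B_{\varepsilon}$, so that the mean-value mechanism applies (you route it through Lemma~\ref{ConvHarm}; the paper computes the nodal integral directly). The only difference is presentational: the paper states the conclusion in matrix form ($F_{\varepsilon}^h = D^h$ and invertibility of $A_{h}$), while you subtract the two Galerkin equations and invoke coercivity to conclude $w_{h}=0$.
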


\begin{proof}
Let us write down explicitly the discretized right-hand side $F_{\varepsilon}^{h}$ associated to the function $f_{\varepsilon}$: for all node $i$ and associated test function $v_{i} \in V_{h}^k$,
\begin{equation*}
\big( F_{\varepsilon}^{h} \big)_{i} = \int_{\Omega} \frac{1}{\sigma(B_{\varepsilon})} \mathbbm{1}_{B_{\varepsilon}}(x) v_{i}(x) \, {\tn dx} = \int_{B_{\varepsilon} \subset T_{0}} \frac{1}{\sigma(B_{\varepsilon})} v_{i}(x) \, {\tn dx},
\end{equation*}
and $v_{i}$ is affine (and so harmonic) on $T_{0}$, therefore
\begin{equation*}
\big( F_{\varepsilon}^{h} \big)_{i} = \left\{ \begin{array}{cl}
v_{i}(x_{0}) & \text{if } i \text{ is a node of the triangle } T_{0},\\
0 & otherwise.
\end{array} \right.
\end{equation*}
We note that $F_{\varepsilon}^{h} = D^{h}$, where $D^h$ is the discretized right-hand side vector associated to the Dirac mass. That is why, with $A_{h}$ the Laplacian matrix,
\begin{equation*}
u_{\varepsilon}^h - u_{\delta}^h = \sum_{i \text{ node}} \left[ A_{h}^{-1} \big(F_{\varepsilon}^h- D^h \big) \right]_{i} v_{i} = 0.
\end{equation*}
\end{proof}

\begin{rem}
$F_{\varepsilon}^h = D^h$ holds as long as $B_{\varepsilon} \subset T_{0}$. Otherwise, we still have ${u_{\delta}}_{\dis{|_{\Omega_{0}}}} = {u_{\varepsilon}}_{\dis{|_{\Omega_{0}}}}$ (Theorem \ref{bal}), but $F_{\varepsilon}^h \neq D^h$, and so ${u_{\delta}^h}_{\dis{|_{\Omega}}} \neq {u_{\varepsilon}^h}_{\dis{|_{\Omega}}}$.
\end{rem}

\subsection{Proof of Theorem \ref{THM1}.}
Theorem \ref{THM1} can be now proved.

\begin{proof}
First, by triangular inequality, we can write, for $s\in \{ 0,1 \}$:
\begin{equation*}
\| u_{\delta} - u_{\delta}^{h} \|_{s,\Omega_{0}} \leqslant \| u_{\delta} - u_{\varepsilon} \|_{s,\Omega_{0}} + \| u_{\varepsilon} - u_{\varepsilon}^{h} \|_{s,\Omega_{0}} + \| u_{\varepsilon}^{h} - u_{\delta}^{h} \|_{s,\Omega_{0}}.
\end{equation*}
Besides, thanks to Theorem \ref{bal}, we have
\begin{equation}\label{appbal}
\| u_{\delta} - u_{\varepsilon} \|_{s,\Omega_{0}} =0,
\end{equation}
and thanks to Lemma \ref{sameRHS}, we have
\begin{equation*}
\| u_{\delta}^h - u_{\varepsilon}^h \|_{s,\Omega_{0}} =0.
\end{equation*}
Finally we get
\begin{equation}\label{inepre}
\| u_{\delta} - u_{\delta}^{h} \|_{s,\Omega_{0}} \leqslant \| u_{\varepsilon} - u_{\varepsilon}^{h} \|_{s,\Omega_{0}}.
\end{equation}
We will apply Nitsche and Schatz Theorem to $e = u_{\varepsilon} - u_{\varepsilon}^h$. With $\ell=2$, $s=1$, and $p=0$,
\begin{equation}\label{NSapp}
\| e \|_{1,\Omega_{0}} \leqslant C \left( h \|u_{\varepsilon} \|_{2,\Omega_{1}} + \| e \|_{0,\Omega_{1}} \right).
\end{equation}
The domain $\Omega$ is a smooth and $f_{\varepsilon} \in \L^2(\Omega)$, so $u_{\varepsilon} \in H^2(\Omega) \medcap H_{0}^1(\Omega)$, and then, thanks to inequality \eqref{ineclass},
\begin{equation*}
\| e \|_{0,\Omega_{1}} \leqslant \|e \|_{0,\Omega} \leqslant C h^2 \|u_{\varepsilon} \|_{2,\Omega} \leqslant C h^2 \| f_{\varepsilon} \|_{0,\Omega}.
\end{equation*}
As $\| f_{\varepsilon} \|_{0,\Omega}$ can be calculated,
\begin{equation*}
\| f_{\varepsilon} \|_{0,\Omega} = \dis{\left( \int_{\Omega} \left( \frac{1}{\pi \varepsilon^2} \mathbbm{1}_{B_{\varepsilon}}(y) \right)^2 {\tn dy} \right)^{1/2} = \frac{1}{\varepsilon \sqrt{\pi}}},
\end{equation*}
for $\varepsilon \sim h/10$ (in order to respect the assumption on $B_{\varepsilon}$), we get
\begin{equation}\label{errom1}
\|e\|_{0,\Omega_{1}} \leqslant Ch.
\end{equation}
Finally, according to Theorem \ref{bal}, ${u_{\delta}}_{\dis{|_{\Omega_{1}}}} = {u_{\varepsilon}}_{\dis{|_{\Omega_{1}}}}$, therefore combining \eqref{NSapp} and \eqref{errom1}, we get
\begin{equation}\label{erreps1}
\| u_{\varepsilon} - u_{\varepsilon}^h \|_{1,\Omega_{0}} = \| e \|_{1,\Omega_{0}} \leqslant Ch.
\end{equation}
At last, we obtain from inequalities \eqref{inepre} and \eqref{erreps1} the expected error estimate, that is
\begin{equation*}
\| u_{\delta} - u_{\delta}^{h} \|_{1,\Omega_{0}} \leqslant Ch.
\end{equation*}
\end{proof}

\esp

\begin{rem}
In the particular case of the Lagrange finite elements, K\"oppl and Wohlmuth \cite{KoWo} have shown for the lowest order case quasi-optimality and for higher order optimal a priori estimates, in the $\L^2$-norm of a subdomain which does not contain $x_{0}$. The proof is based on Wahlbin-type arguments, which are similar to Nitsche and Schatz Theorem (see \cite{LWa1}, \cite{LWa2}), and different arguments from the ones presented in this paper, like the use of an operator of Scott and Zhang type (see \cite{ScZh}). More precisely, if we introduce a domain $\Omega_{2}$ such that $\Omega_{0} \subset \subset \Omega_{1} \subset \subset \Omega_{2} \subset \subset \Omega$, $x_{0} \notin \overline{\Omega_{2}}$, and satisfying Assumption \ref{maillage}, with our notations and under the assumptions of Theorem \ref{THM}, their main result is
\begin{equation}\label{ineKoWo}
\| u_{\delta} - u_{\delta}^h \|_{0,\Omega_{1}} \leqslant C(\Omega_{1}, \Omega_{2}, \Omega) \left\{ \begin{array}{ll}
h^2 |\ln(h)| & \text{if } k = 1,\\
h^{k+1} & \text{if } k > 1.
\end{array}\right.
\end{equation}
Using this result, we can easily prove the inequality \eqref{INEQTHM}: we apply the Nitsche and Schatz Theorem on $\Omega_{0}$ and $\Omega_{1}$ for $l = k+1$ and $q=0$,
\begin{equation*}
\| u_{\delta}-u_{\delta}^h \|_{1,\Omega_{0}} \leqslant C \left( h^{k} + \|u_{\delta} - u_{\delta}^h \|_{0,\Omega_{1}} \right).
\end{equation*}
Combining it with \eqref{ineKoWo}, we finally get
\begin{equation*}
\| u_{\delta}-u_{\delta}^h \|_{1,\Omega_{0}} \leqslant C \left( h^k + h^{k+1}| \ln(h)| \right) \leqslant C h^k.
\end{equation*}
This result is slightly stronger than inequality \eqref{INEQTHM}, but it is limited to the Lagrange finite elements and the $H^{1}$-norm.
\end{rem}

\esp

\section{Numerical illustrations.}\label{NumIllus}
In this section, we illustrate our theorical results by numerical examples.

\paragraph{Concentration of the error around the singularity.}
First, we present one of the computations which drew our attention to the fact that the convergence could be better far from the singularity. For this example, we define $\Omega$ as the unit disk,
\begin{equation*}
\Omega = \{ x = (x_{1},x_{2}) \in \R^2 : \|x\|_{2} < 1 \},
\end{equation*}
$\Omega_{0}$ as the portion of $\Omega$
\begin{equation*}
\Omega_{0} = \{ x = (x_{1},x_{2}) \in \R^2 :  0.2 <\|x\|_{2} < 1 \},
\end{equation*}
and finally $x_{0} = (0, 0)$ the origin. In this case, the exact solution $u_{\delta}$ of problem $(P_{\delta})$  is given by
\begin{equation*}
u(x) = -\frac{1}{4\pi} \log \Big( x_{1}^2 + x_{2}^2 \Big).
\end{equation*}

When problem $(P_{\delta})$ is solved by the $P_{1}$-finite element method, the numerical solution $u_{\delta}^h$ converges to the exact solution $u_{\delta}$ at the order 1 on the entire domain $\Omega$ for the $\L^2$-norm (see \cite{RSc}). The previous example has shown that the convergence far from the singularity is faster, since the order of convergence in this case is 2 (see \cite{KoWo}). The difference of convergence rates for $\L^2$-norm on $\Omega$ and $\Omega_{0}$ let us suppose that the preponderant part of the error is concentrated around the singularity, as can be seen in Figures \ref{err10}, \ref{err15}, \ref{err20}, and \ref{err30}. Indeed, they respectively show the repartition of the error for $1/h \simeq 10, 15, 20$ and $30$.

\begin{figure}[H]
\centering
\psfrag{Error}{Error}
\psfrag{0.16}{\hspace*{-0.4cm}0.16}
\psfrag{0.12}{0.12}
\psfrag{0.08}{0.08}
\psfrag{0.04}{0.04}
\psfrag{0}{\hspace*{-0.1cm}0}
\begin{minipage}[b]{.45\linewidth}
\centering
\epsfig{figure=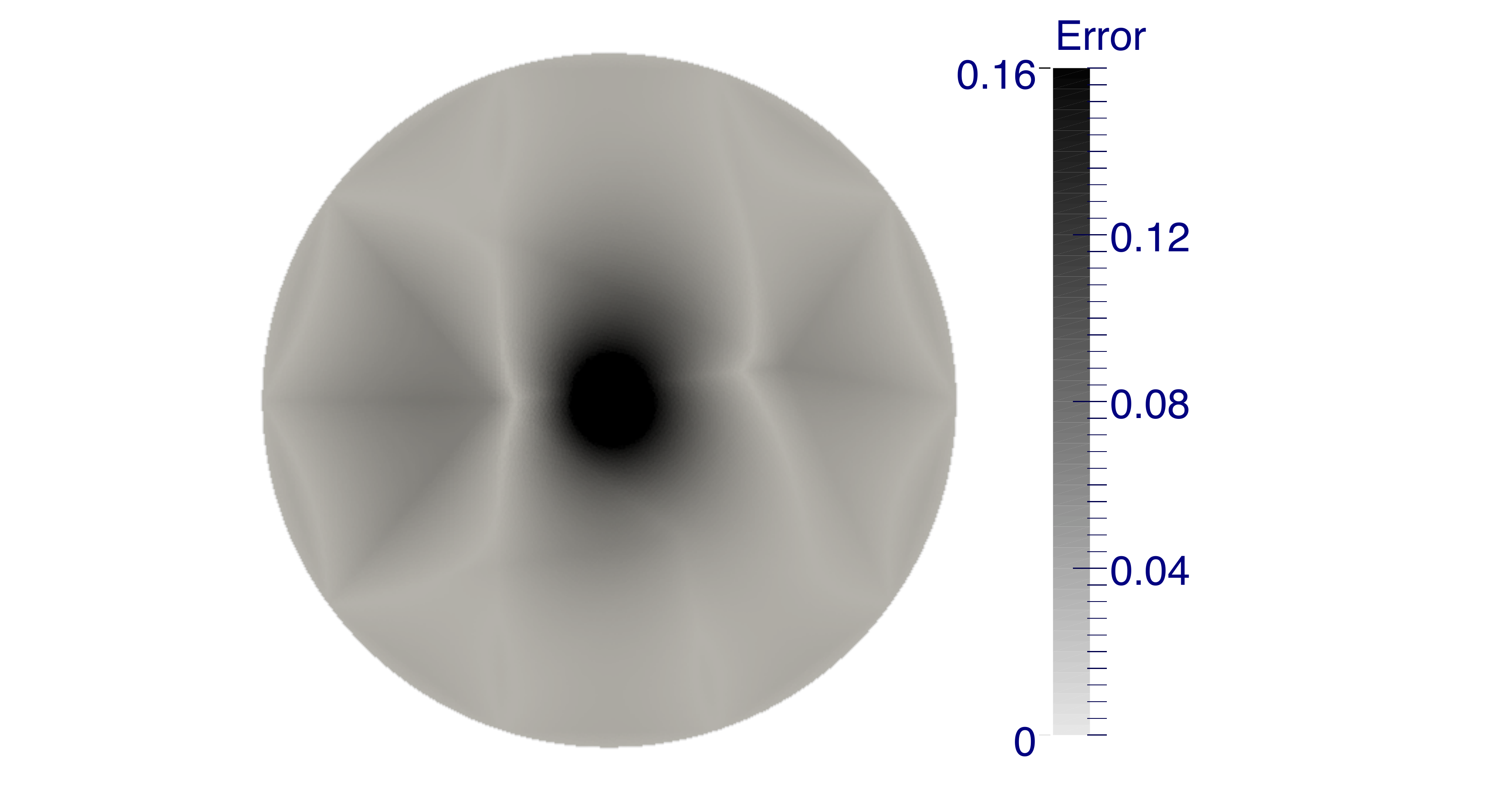,width=\linewidth}
\caption{Error for $1/h \simeq 10$.}
\label{err10}
\end{minipage}
\begin{minipage}[b]{.45\linewidth}
\centering
\epsfig{figure=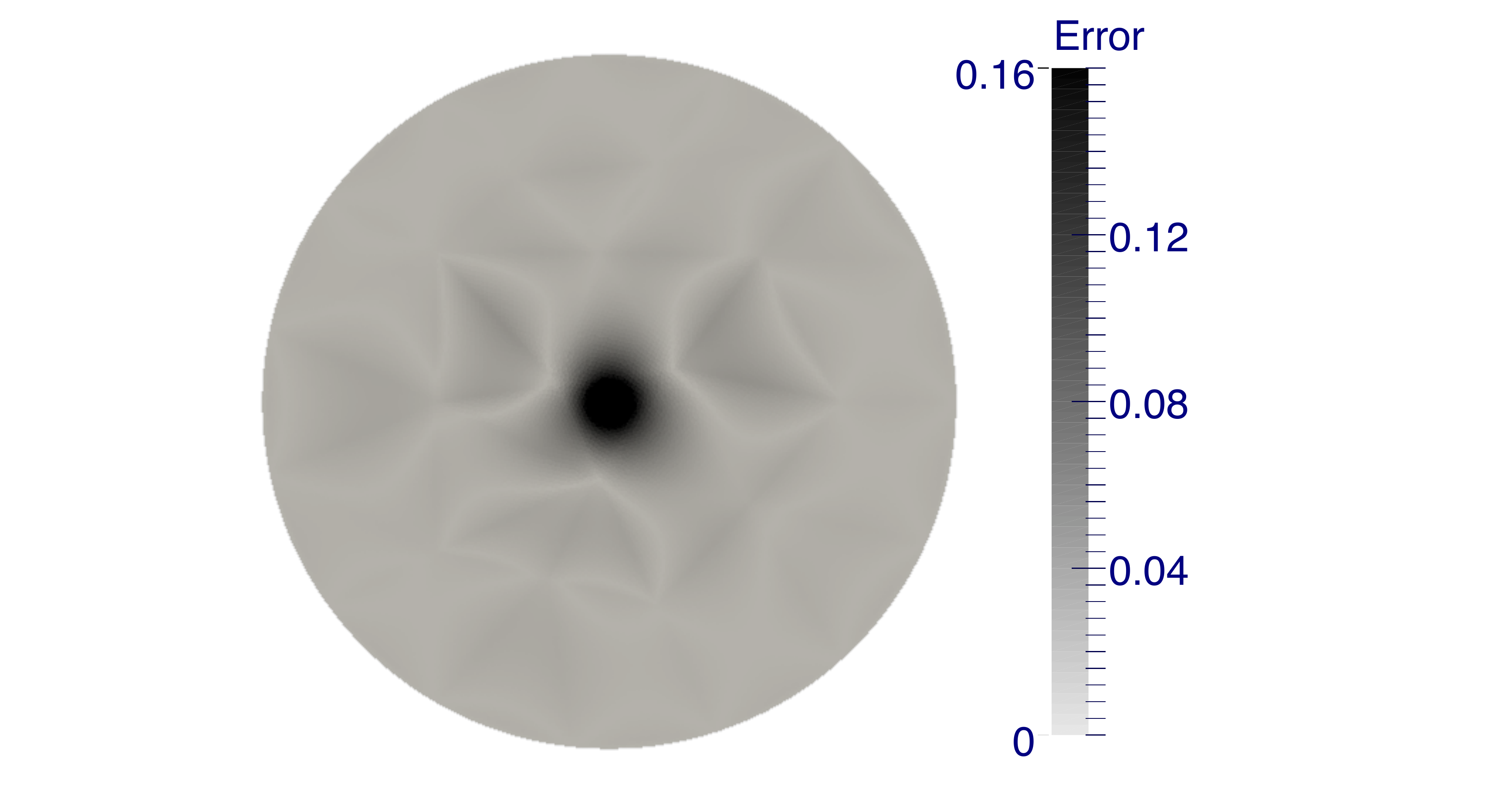,width=\linewidth}
\caption{Error for $1/h \simeq 15$.}
\label{err15}
\end{minipage}
\end{figure}

\begin{figure}[H]
\centering
\psfrag{Error}{Error}
\psfrag{0.16}{\hspace*{-0.4cm}0.16}
\psfrag{0.12}{0.12}
\psfrag{0.08}{0.08}
\psfrag{0.04}{0.04}
\psfrag{0}{\hspace*{-0.1cm}0}
\begin{minipage}[b]{.45\linewidth}
\centering
\epsfig{figure=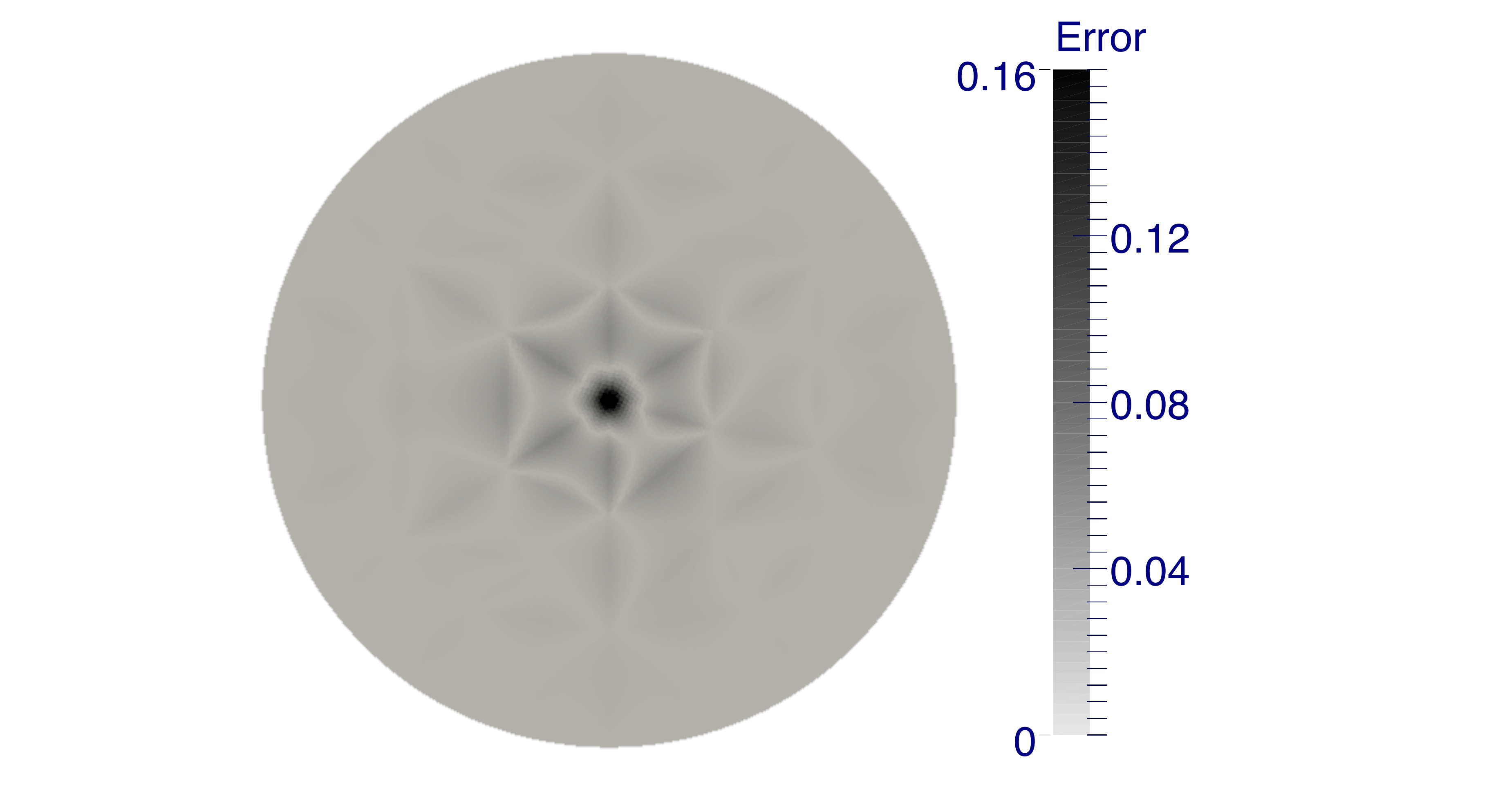,width=\linewidth}
\caption{Error for $1/h \simeq 20$.}
\label{err20}
\end{minipage}
\begin{minipage}[b]{.45\linewidth}
\centering
\epsfig{figure=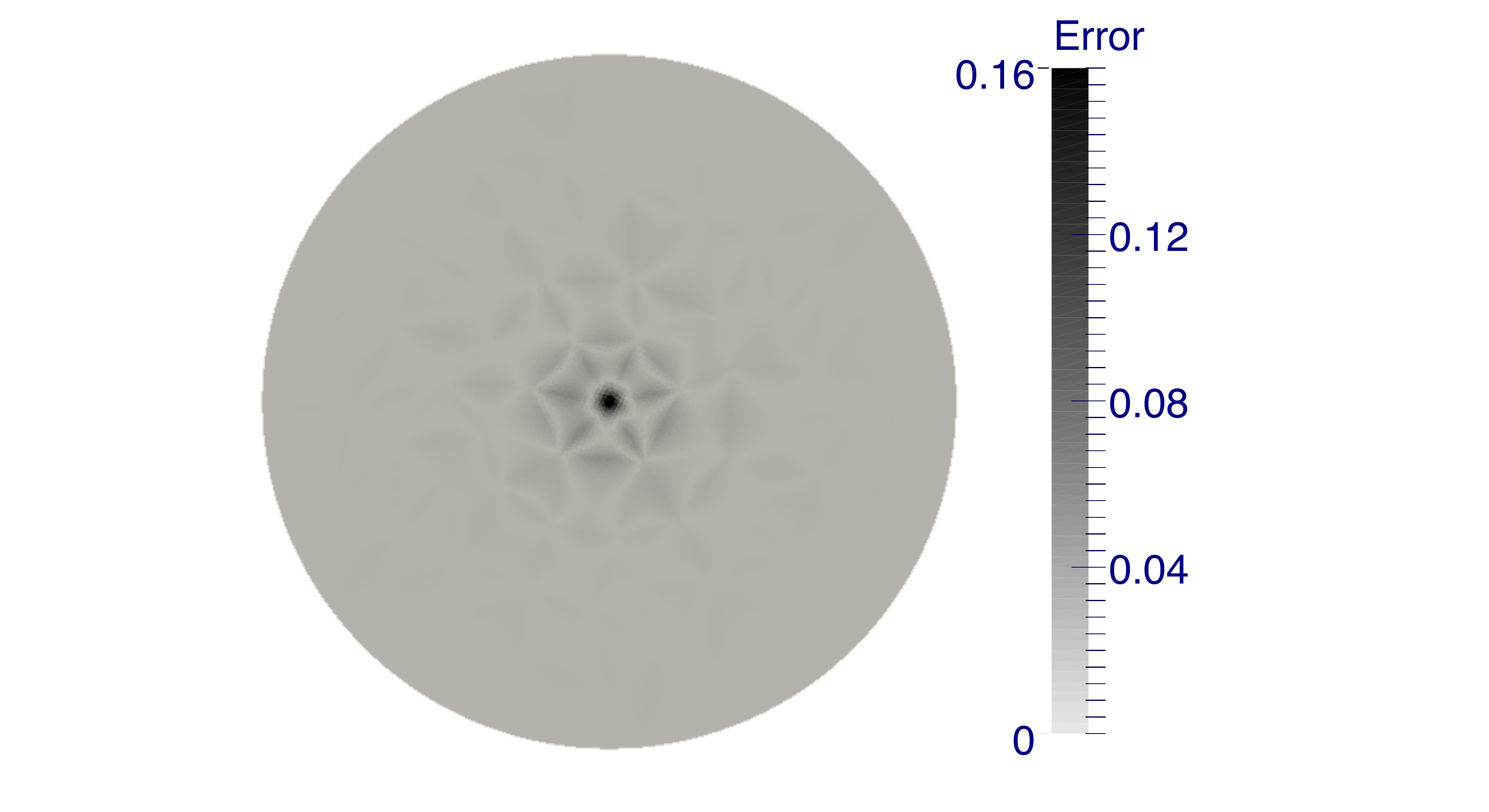,width=\linewidth}
\caption{Error for $1/h \simeq 30$.}
\label{err30}
\end{minipage}
\end{figure}

\esp

\paragraph{Estimated orders of convergence.}
Figure \ref{ordersh1} shows the estimated order of convergence for the $H^1(\Omega_{0})$-norm for the $P_{k}$-finite element method, where $k=1, 2, 3$ and $4$, in dimension 2. The convergence far from the singularity (i.e. excluding a neighborhood of the point $x_{0}$) is the same as in the regular case: the $P_{k}$-finite element method converges at the order $k$ on $\Omega_{0}$ for the $H^1$-norm, as proved in this paper with a $\sqrt {|\ln(h)|} $ multiplier.


\begin{figure}[H]
\centering
\psfrag{Elements P1}{Elements $P_{1}$}
\psfrag{Elements P2}{Elements $P_{2}$}
\psfrag{Elements P3}{Elements $P_{3}$}
\psfrag{Elements P4}{Elements $P_{4}$}
\psfrag{Order=1.0011}{Order = 1.00}
\psfrag{Order=2.0029}{Order = 2.00}
\psfrag{Order=3.0322}{Order = 3.03}
\psfrag{Order=4.2288}{Order = 4.22}
\psfrag{-0.0001}{ \ \ \ \ 1}
\psfrag{-6.9078}{ \ $10^{-3}$}
\psfrag{-13.816}{ \  $10^{-6}$}
\psfrag{-20.723}{ \ $10^{-9}$}
\psfrag{-27.631}{ \ $10^{-12}$}
\psfrag{-4.6052}{ \ $10^{-2}$}
\psfrag{-2.3026}{ \ $10^{-1}$}
\includegraphics[scale=0.85]{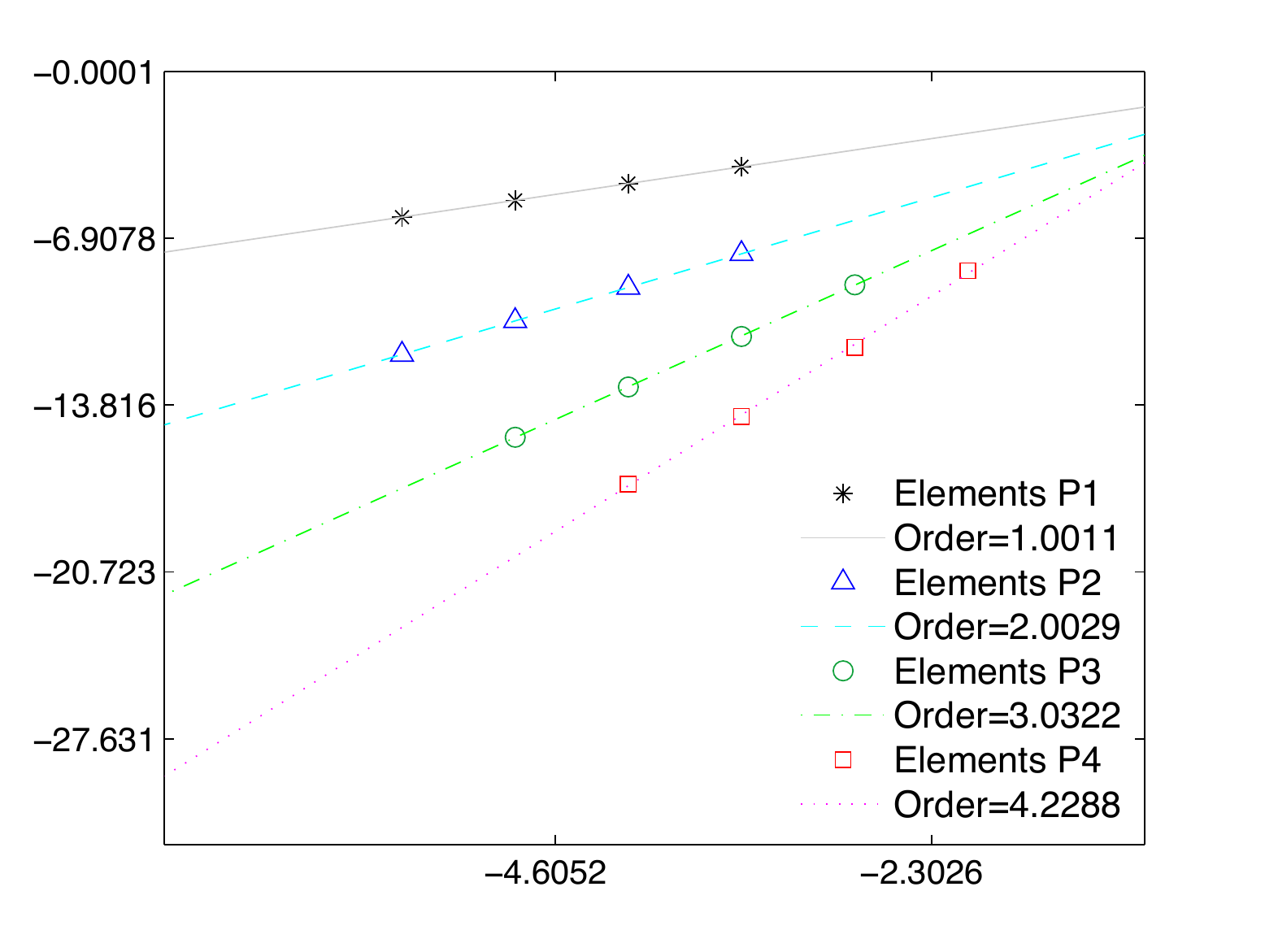}
\caption{Estimated order of convergence for $H^1(\Omega_{0})$-norm for the finite element method $P_{k}$, $k=1, 2, 3, 4$.}
\label{ordersh1}
\end{figure}


\esp

\bibliographystyle{plain}
\bibliography{biblio}

\end{document}